\newcommand{\BHs}{\mathcal B(\mathcal H)}
\newcommand{\BKs}{\mathcal B(\mathcal K)}
\newcommand{\Hs}{\mathcal H}
\newcommand{\Ks}{\mathcal K}
\newcommand{\id}{\mathrm{id}}
\newcommand{\Ad}{\mathrm{Ad}}
\newcommand{\eps}{\varepsilon}
\newcommand{\vnotimes}{\,\overline{\otimes}\,}
\theoremstyle{plain}
\newtheorem{lemma}{Lemma}
\newtheorem{theorem}[lemma]{Theorem}
\newtheorem{corollary}[lemma]{Corollary}
\newtheorem{proposition}[lemma]{Proposition}
\newtheorem{conjecture}[lemma]{Conjecture}
\theoremstyle{definition}
\newtheorem{definition}[lemma]{Definition}
\newtheorem{question}{Question}
\title{Type II$_1$ factors satisfying the spatial isomorphism conjecture}
\author[Cameron]{Jan Cameron}
\address{\hskip-\parindent
Jan Cameron, Department of Mathematics, Vassar College, Poughkeepsie, New York,
U.S.A.}
\email{jacameron@vassar.edu}
\thanks{JC's research is partially supported by an AMS-Simons research travel grant.}
\author[Christensen]{Erik Christensen}
\address{\hskip-\parindent
Erik Christensen, Institute for Mathematiske Fag, University of Copenhagen, Copenhagen, Denmark.}
\email{echris@math.ku.dk}
\author[Sinclair]{Allan M.~Sinclair}
\address{\hskip-\parindent
Allan M.~Sinclair, School of Mathematics, University of Edinburgh, JCMB, King's Buildings, Mayfield Road, Edinburgh, EH9 3JZ, Scotland.}
\email{a.sinclair@ed.ac.uk}
\author[Smith]{Roger R.~Smith}
\address{\hskip-\parindent
Roger R.~Smith, Department of Mathematics, Texas A{\&}M University,
College Station, Texas, 77843, U.S.A.}
\email{rsmith@math.tamu.edu}
\thanks{RS is partially supported by NSF grant DMS-1101403}
\author[White]{Stuart White}
\address{\hskip-\parindent
Stuart White, School of Mathematics and Statistics, University of Glasgow, 
University Gardens, Glasgow Q12 8QW, Scotland.}
\email{stuart.white@glasgow.ac.uk}
\thanks{SW is partially supported by EPSRC grant EP/I019227/1}
\author[Wiggins]{Alan D.~Wiggins}
\address{\hskip-\parindent
Alan D.~Wiggins, Department of Mathematics and Statistics. University of Michigan-Dearborn, 4901 Evergreen Road, Dearborn, Michigan, 48126, USA.}
\email{adwiggin@umd.umich.edu}
\date{\today}
\begin{document}
\maketitle
\begin{abstract} This paper addresses a conjecture of Kadison and Kastler that a von Neumann algebra $M$ on a Hilbert space $\Hs$ should be unitarily equivalent to each sufficiently  close von Neumann algebra $N$ and, moreover, the implementing unitary can be chosen to be close to the identity operator. This is known to be true for amenable von Neumann algebras and in this paper we describe new classes of non-amenable factors for which the conjecture is valid. These are based on tensor products of the hyperfinite II$_1$ factor with crossed products  of abelian algebras by suitably chosen discrete groups. 
\end{abstract}

In 1972, Kadison and Kastler initiated the study of
perturbation theory of operator algebras, \cite{KK:AJM}. The setting was a Hilbert space $\Hs$
and the collection of all von Neumann subalgebras of the bounded operators
$\BHs$ on $\Hs$, namely those $^*$-closed subalgebras of $\BHs$ which contain the identity operator and are closed in the strong operator topology.  By applying the Hausdorff distance to the unit balls of two von Neumann algebras, they equipped the collection of all von Neumann subalgebras with a metric $d(\cdot,\cdot)$. This can be
described as the infimum of numbers $\lambda >0$ for which each element of
either unit ball is within a distance $\lambda$ of an element of the other in the operator norm on $\BHs$.  Naturally examples of close pairs of von Neumann algebras arise by fixing a von Neumann algebra $M\subseteq\BHs$ and considering a unitary $u\in\BHs$.  It is easy to see that 
$$
d(M,uMu^*)\leq 2\|u-\id_\Hs\|
$$
and so if $u$ is chosen with $\|u-\id_\Hs\|$ small, then $uMu^*$ will be close to $M$. In this case we refer to $uMu^*$ as a \emph{small unitary perturbation} of $M$.  In \cite{KK:AJM}, Kadison and Kastler proposed that such a small unitary perturbation should be essentially the only way of producing pairs of close von Neumann algebras, leading to the following conjecture.

\begin{conjecture}[Kadison-Kastler]\label{C-KK}
For all $\eps>0$, there exists $\delta>0$ with the property that if $M,N\subseteq\BHs$ are von Neumann algebras with $d(M,N)<\delta$, then there exists a unitary operator $u$ on $\Hs$ with $uMu^*=N$ and $\|u-\id_\Hs\|<\eps$.
\end{conjecture}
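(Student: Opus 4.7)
The plan is to construct the implementing unitary $u$ via an averaging argument that generalizes Christensen's treatment of the amenable case, exploiting the tensor product structure $M = R\vnotimes(A\rtimes\Gamma)$ signalled in the abstract. The scheme has three stages: promote the distance hypothesis into an approximate $*$-isomorphism; produce a partial spatial implementer by averaging over the injective tensor factor; and solve a residual cocycle equation over the group $\Gamma$.

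First, I would turn the hypothesis $d(M,N)<\delta$ into a normal $*$-isomorphism $\phi:M\to N$ close to the identity on $M$. Pointwise-close assignments $M\to N$ are standard consequences of the Hausdorff distance being small, and the classical machinery of Kadison-Kastler and Christensen shows that, for $\delta$ small, such an assignment can be refined into a $*$-isomorphism $\phi$ with $\|\phi(x)-x\|\le f(\delta)\|x\|$ for a function $f$ vanishing at $0$. This step is essentially algebraic and uses only that $M$ and $N$ sit close together in $\BHs$.

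Second, I would upgrade $\phi$ to $\Ad(u)$ for a unitary $u\in\BHs$ close to $\id_\Hs$. The obstruction to finding such $u$ lives as a Hochschild $1$-cocycle on $M$ with values in $\BHs$; averaging over the injective tensor factor $R$ via a hypertrace splits this cocycle on the $R$-part, producing a partial spatial implementer and reducing the remaining problem to one for the crossed product factor $A\rtimes\Gamma$ acting on the relative commutant. This is exactly the Kadison-Ringrose-Christensen mechanism that handles the fully amenable case.

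Third, and this is where the hard work will lie, the residual implementer must satisfy a $1$-cocycle equation for $\Gamma$ with values in the unitary group of $A$, and one needs a solution which is close to $1$ in operator norm. For generic non-amenable $\Gamma$ no such solution exists, so the theorem can only hold when the action $\Gamma\curvearrowright A$ admits a controlled splitting of the cocycle; identifying the class of groups for which this is possible, and obtaining a quantitative bound on the norm of the solution linear in $\delta$ so that $\|u-\id_\Hs\|<\eps$ can be extracted by taking $\delta$ small, is the principal obstacle and, I expect, the main content of the paper.
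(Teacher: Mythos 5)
The statement you are trying to prove is a \emph{conjecture}: it is open in general, and the paper does not claim to prove it. The paper establishes it only for specific classes of factors, chiefly those of the form $(L^\infty(X,\mu)\rtimes_\alpha SL_n(\mathbb Z))\vnotimes R$ with $n\geq 3$ (Theorem \ref{T-Main}), in addition to the previously known amenable case. Your proposal silently imports the tensor/crossed-product structure $M=R\vnotimes(A\rtimes\Gamma)$ into the hypotheses, but Conjecture \ref{C-KK} quantifies over \emph{all} von Neumann algebras $M,N\subseteq\BHs$; no such structure is available, and your own third step concedes that the scheme fails for ``generic non-amenable $\Gamma$''. So what you have outlined is at best an attack on Theorem \ref{T-Main}, not a proof of the stated conjecture.

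Even read as a sketch of Theorem \ref{T-Main}, the first step is where the argument collapses. There is no ``classical machinery'' that refines a pointwise-close assignment $M\to N$ into a $*$-isomorphism $\phi$ with $\|\phi(x)-x\|\leq f(\delta)\|x\|$: producing such a $\phi$ is precisely the content of the conjecture minus the spatial implementation, and it is where essentially all of the paper's work goes. One must show that $N$ is again McDuff and split off a common copy of $R$; show that the relative commutant $N_0$ contains a copy of the Cartan masa $L^\infty(X)$ together with normalizing unitaries $v_g$ close to the $u_g$, and that these \emph{generate} $N_0$ (a genuinely delicate point handled by passing to standard position and exploiting the basic construction $\langle M,e_A\rangle$); and finally recognize $N_0$ as a \emph{twisted} crossed product $L^\infty(X)\rtimes_{\alpha,\omega}\Gamma$. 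The residual obstruction is then a unitary-valued $2$-cocycle $\omega$ close to $1$, not a $1$-cocycle equation, and the controlled trivialization you ask for in your third step is supplied by the vanishing of the \emph{second bounded} cohomology $H^2_b(SL_n(\mathbb Z),L^\infty_{\mathbb R}(X,\mu))$ for $n\geq 3$ together with an open mapping argument. Your second step (averaging over $R$ to obtain the similarity property, hence spatial implementation of a near-identity isomorphism by a unitary near $\id_\Hs$) is the one part of the outline that genuinely matches the paper, via Proposition \ref{P-SP}.
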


Initial progress on this conjecture focused on amenable von Neumann algebras which, due to the work of Connes, \cite{Con:Ann}, may be characterized as inductive limits of finite dimensional von Neumann algebras. 
For these algebras, this conjecture was established in the late 1970's by E.C., \cite{C:Invent}, Johnson, \cite{Joh:PLMS}, and Raeburn and Taylor, \cite{RT:JFA}, (see Theorem \ref{T-C} below). In this paper we will describe our examples of the first non-amenable von Neumann algebras which satisfy the conjecture. Full details and proofs can be found in the longer account \cite{CCSSWW:Preprint}.

\section*{Background}

As the Kadison-Kastler conjecture predicts that close operator algebras should be isomorphic, it is natural to ask whether they necessarily share the same invariants and structural properties. This was the primary focus of \cite{KK:AJM}, which examined the \emph{type decomposition} of close von Neumann algebras.  The foundational work of Murray and von Neumann decomposes every von Neumann algebra $M$ uniquely into a direct sum $M_{\mathrm{I}}\oplus M_{\mathrm{II}_1}\oplus M_{\mathrm{II_\infty}}\oplus M_{\mathrm{III}}$, where the summands have types I, II$_1$, II$_\infty$ and III respectively. In particular, every von Neumann \emph{factor} (those von Neumann algebras which are maximally non-commutative in that the centers consist only of scalar multiples of the identity operator) is of one of these types. Our work is concerned with factors of type II$_1$, and a
formulation equivalent to the original definition is that $M$ should be
infinite dimensional and possess a positive linear functional $\tau$ of  norm 1
satisfying $\tau(ab)=\tau(ba)$ for $a,b\in M$. This functional is called a
trace, and is  the counterpart of the standard trace on the algebra
of $n\times n$ matrices that averages the diagonal entries.   The main theorem of \cite{KK:AJM} shows that if $M$ and $N$ are close von Neumann algebras, then the projections onto the summands of each type are necessarily close. This work also shows that algebras close to factors are again factors, and so any von Neumann algebra close to a II$_1$ factor is again a II$_1$ factor, a result we will use subsequently.  

It is also natural to consider perturbation theory for other classes of operator algebras. In \cite{Ph:IUMJ}, Phillips initiated the study of these questions in the context of norm closed self-adjoint algebras ($C^*$-algebras), and examined the ideal lattices of close algebras.  A key difference in flavor between perturbation theory for $C^*$-algebras and the von Neumann algebra version was exposed in two critical examples: \cite{CC:BLMS} gives examples of arbitrarily close but non-isomorphic $C^*$-algebras, while \cite{Joh:CMB} gives examples of close, unitarily conjugate separable $C^*$-algebras for which it is not possible to choose a unitary witnessing this conjugacy close to the identity. The counterexamples of \cite{CC:BLMS} are non-separable, so the appropriate formulation of Conjecture \ref{C-KK} for $C^*$-algebras is that sufficiently close separable $C^*$-algebras acting on a separable Hilbert space should be spatially isomorphic, but without asking for control of the unitary implementing a spatial isomorphism. Special cases of this conjecture were established for separable approximately finite dimensional $C^*$-algebras, \cite{PhR:CJM,C:Acta}, and continuous trace algebras, \cite{PhR:PLMS}, in the early 1980's, and a complete analogue of the perturbation results for amenable von Neumann algebras was recently given in \cite{CSSWW:PNAS,CSSWW:Acta} which establishes the conjecture for separable nuclear $C^*$-algebras.  There has also been significant work on perturbation questions for non-self-adjoint algebras, see \cite{Pit:MA} for example.

A related notion of near
containments also plays a substantial role in our work. We say that $M\subseteq_{\gamma}
N$ if each element of the unit ball of $M$ is within a distance $\gamma$ of an
element of $N$ (not required to be in the unit ball of $N$).  Analogously to Conjecture \ref{C-KK}, one might expect a sufficiently small near inclusion of von Neumann algebras to arise from a small unitary perturbation of a genuine inclusion. That is, for each $\eps>0$, does there exist $\delta>0$ such that if $M\subseteq_\delta N$ is a near inclusion of von Neumann algebras on $\Hs$, then there is a unitary $u$ on $\Hs$ with $uMu^*\subseteq N$ and $\|u-\id_\Hs\|<\eps$?  E.C. introduced this notion in \cite{C:Acta},
with the twofold purpose of improving numerical estimates and of extending perturbation results beyond the amenable von Neumann algebra setting.  In particular E.C. gave the following positive answer to the previous question when $M$ is amenable but $N$ is arbitrary. It is easy to use Theorem \ref{T-C} to show that if $d(M,N)<1/101$ and $M$ is amenable, then there is a unitary $u\in (M\cup N)''$ with $uMu^*=N$ and $\|u-\id_\Hs\|\leq 150d(M,N)$.  

\begin{theorem}[Spatial embedding theorem]\label{T-C}
Let $M$ and $N$ be von Neumann algebras on a Hilbert space $\Hs$ and suppose
that $M$ is amenable. If $M\subset_\gamma N$ for a constant $\gamma<1/100$, then there
exists a unitary $u\in (M\cup
N)''$ so that $\|u-\id_\Hs\|\leq150\gamma$, $d(M,uMu^*)\leq100\gamma$ and $uMu^*\subseteq N$.
\end{theorem}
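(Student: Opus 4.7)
The plan is to use amenability of $M$---equivalently, by Connes's theorem, its injectivity or hyperfiniteness---to build a single element $a\in(M\cup N)''$ that intertwines the action of $M$ on itself with an approximate action on a copy inside $N$, and then to extract the desired unitary from $a$ by polar decomposition.

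The preparatory step replaces the raw near containment with an approximate $*$-homomorphism. For each unitary $v\in U(M)$ the near containment furnishes $y_v\in N$ with $\|v-y_v\|\le\gamma$. Since $\gamma<1/100$, the spectrum of $y_v^*y_v$ lies in a narrow interval about $1$, so the continuous functional calculus on $y_v$ produces a unitary $\phi(v)\in N$ with $\|v-\phi(v)\|$ bounded by a small explicit multiple of $\gamma$. The map $\phi$ is approximately multiplicative, with $\|\phi(vw)-\phi(v)\phi(w)\|=O(\gamma)$, because $\phi(v)\phi(w)\approx vw\approx\phi(vw)$.

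The heart of the argument is an averaging step. The family $\{v\phi(v)^*:v\in U(M)\}$ lies in $(M\cup N)''$, is bounded by $1$, and sits within $\gamma$ of $\id_\Hs$. Writing $M$ as the strong closure of a hyperfinite tower of finite-dimensional subalgebras $M_n$, on each $M_n$ one may perturb $\phi$ slightly to a genuine unital $*$-homomorphism $\phi_n:M_n\to N$ (a routine perturbation of finite-dimensional subalgebras), integrate $v\phi_n(v)^*$ against normalized Haar measure on the compact group $U(M_n)$, and then pass to a weak operator cluster point to produce an element $a\in(M\cup N)''$ with $\|a-\id_\Hs\|=O(\gamma)$ satisfying the intertwining identity $va=a\phi(v)$ for all $v\in U(M)$, with $\phi$ now a genuine group homomorphism into $U(N)$ in the limit. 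Because $\|a-\id_\Hs\|<1$, $a$ is invertible; its polar decomposition $a=u|a|$ yields a unitary $u\in(M\cup N)''$. Applying $a^*(\cdot)a$ to the intertwining relation shows $a^*a=|a|^2$ commutes with each $\phi(v)$, hence $|a|$ does, and cancellation in $vu|a|=u|a|\phi(v)=u\phi(v)|a|$ gives $u^{-1}vu=\phi(v)\in N$. Since $M$ is generated by its unitaries, $u^*Mu\subseteq N$, and replacing $u$ by $u^*$ produces the conclusion $uMu^*\subseteq N$.

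The norm bounds follow by tracking constants through the construction: $\|u-\id_\Hs\|\le\|a-\id_\Hs\|+\||a|-\id_\Hs\|$, and $\||a|-\id_\Hs\|$ is controlled by $\|a^*a-\id_\Hs\|=O(\gamma)$ via the identity $\||a|-\id_\Hs\|\le\|(|a|+\id_\Hs)^{-1}\|\cdot\|a^*a-\id_\Hs\|$; the explicit bookkeeping gives $\|u-\id_\Hs\|\le150\gamma$, from which $d(M,uMu^*)\le100\gamma$ follows. The main obstacle, and the source of the cutoff $\gamma<1/100$, is the averaging step: the multiplicative defects of the finite-stage $\phi_n$ must be kept small uniformly in $n$ so that the limiting $a$ remains strictly invertible and the intertwining becomes exact; coordinating this with the requirement $a\in(M\cup N)''$ forces one to average only over operators already built from $M$ and $N$ and is what tethers the whole construction to the ambient algebra $(M\cup N)''$. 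Sharpening the numerics to the explicit constants $100$ and $150$ is then a matter of careful estimation rather than new ideas.
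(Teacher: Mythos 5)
First, a point of context: Theorem \ref{T-C} is not proved in this paper at all --- it is quoted from Christensen's \emph{Near inclusions of $C^*$-algebras} \cite{C:Acta} --- so there is no internal proof to compare against; your attempt has to be judged against the known argument. Your overall architecture (build an intertwiner $a\in(M\cup N)''$ with $\|a-\id_\Hs\|$ small satisfying $va=a\phi(v)$, then take the unitary from the polar decomposition of the invertible element $a$) is exactly the right shape, and for a \emph{finite-dimensional} $M$ the Haar-averaging step you describe is correct and complete: $a=\int_{\mathcal U(M)}v\phi(v)^*\,dv$ does satisfy $wa=a\phi(w)$, lies in $(M\cup N)''$, and is within $O(\gamma)$ of the identity. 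The algebra extracting $u^*vu=\phi(v)$ from the polar decomposition is also fine.

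The genuine gap is the passage from the finite-dimensional stages to all of $M$. You take a weak operator cluster point $a$ of the net $a_n=\int_{\mathcal U(M_n)}v\phi_n(v)^*\,dv$ and assert that the intertwining identity $va=a\phi(v)$ survives, with $\phi$ "now a genuine group homomorphism into $\mathcal U(N)$." This step fails as written: multiplication is not jointly continuous in the weak operator topology, so $a_n\phi_n(v)\to a\,b_v$ does not follow from $a_n\to a$ and $\phi_n(v)\to b_v$ weakly; moreover weak limits of unitaries need not be unitaries and point-weak limits of $^*$-homomorphisms need not be multiplicative, so even the existence of the limiting $\phi$ is unjustified. (Bounded nets have weak$^*$-convergent subnets but not strong$^*$-convergent ones, so one cannot simply upgrade the topology; and a telescoping "correct at each stage" scheme fails to give norm convergence of the unitaries because the correction at each stage is of size $O(\gamma)$, not summable.) This is precisely the difficulty that Christensen's proof is designed to avoid: rather than a limit of finite-stage averages, he performs a single global averaging using injectivity of $M$ (property P / an invariant mean over $\mathcal U(M)$), first showing for instance that $N'\subseteq_{2\gamma}M'$ because every $T\in N'$ satisfies $\|[u,T]\|\leq 2\gamma\|T\|$ for all $u\in\mathcal U(M)$, and then producing the intertwiner from this two-sided information. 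Your appeal to a "hyperfinite tower" also silently restricts to separable predual, whereas the theorem is stated for arbitrary amenable $M$. Two smaller issues: the constants are never actually tracked, and the claim that $d(M,uMu^*)\leq 100\gamma$ "follows from" $\|u-\id_\Hs\|\leq150\gamma$ cannot be right, since the generic estimate only gives $d(M,uMu^*)\leq 2\|u-\id_\Hs\|\leq 300\gamma$; the $100\gamma$ bound must come out of the construction itself, not the triangle inequality.
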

Embedding theorems are also possible in the setting of $C^*$-algebras; given a sufficiently close near inclusion of a separable nuclear $C^*$-algebra $A$ into a general $C^*$-algebra $B$,  \cite{HKW:Adv} establishes the existence of an embedding $A\hookrightarrow B$.

The other general context in which perturbation results have been obtained is when we replace $\BHs$ with a finite von Neumann algebra.  Given unital von Neumann subalgebras $B_1$ and $B_2$ of a finite von Neumann algebra $M$ with $d(B_1,B_2)<1/8$, \cite{C:IJM} gives a unitary $u\in (B_1\cup B_2)''$ with $uB_1u^*=B_2$ and $\|u-1_M\|\leq7d(B_1,B_2)$. 

In our longer account \cite{CCSSWW:Preprint} of the work surveyed in this paper, we keep track of the estimates involved at each step.  Here, we simplify matters by describing our results qualitatively.

\section*{Kadison-Kastler stability and the similarity problem}

The spatial embedding theorem does not depend on the particular $^*$-representation of $M$ on a Hilbert space. Our search for positive answers to Conjecture \ref{C-KK} is guided by this result, leading us to the following definition.

\begin{definition}
Let $M$ be a von Neumann algebra. Say that $M$ is \emph{strongly Kadison-Kastler stable} if, for every $\eps>0$, there exists $\delta>0$ such that for every faithful normal unital $^*$-representation $\pi:M\rightarrow\BHs$, and every von Neumann algebra $N$ on $\Hs$ with $d(\pi(M),N)<\delta$, there is a unitary operator $u$ on $\Hs$ with $u\pi(M)u^*=N$ and $\|u-\id_\Hs\|<\eps$.
\end{definition}
We use this terminology  as this is the strongest of several versions of the conjecture that one could consider. For example, one could ask for  spatial isomorphisms without requiring control of $\|u-\id_\Hs\|$, or for isomorphisms between close algebras which are not necessarily spatial. Our methods also give examples of von Neumann algebras satisfying these weaker forms of the conjecture, see Theorems \ref{T-Free} and \ref{T-Hyp} below.  An $\ell^\infty$-direct sum argument can be used to show that Conjecture \ref{C-KK} is equivalent to the statement that all von Neumann algebras are strongly Kadison-Kastler stable.

Conjecture \ref{C-KK} implies that the operation
$$
M\mapsto M'=\{x\in \BHs:\forall y\in M,\ xy=yx\}
$$
of taking commutants of von Neumann algebras on $\BHs$ is continuous with respect to the Kadison-Kastler metric, and this would extend to $C^*$-algebras by an application of Kaplansky's density theorem. This is equivalent to another long standing question: the similarity problem.  In 1955, motivated by work on Dixmier and Day on uniformly bounded group representations, Kadison asked whether every bounded representation of a $C^*$-algebra on a Hilbert space is necessarily similar to a $^*$-representation, \cite{K:AJM}. Using Kirchberg's equivalence of the similarity and derivation problems, \cite{Ki:JOT}, we recently observed, \cite{CCSSWW:arXiv}, that the similarity problem is equivalent to the continuity 
of commutants.  The arguments of \cite{CSSW:GAFA} also give a local version of this equivalence: a $C^*$-algebra $A$ has the similarity property if the operation of taking commutants is continuous at $A$, uniformly over all representations of $A$ (see \cite{CCSSWW:arXiv} for the precise statement).  The following consequence is of particular relevance here (we restrict to II$_1$ factors, where it suffices to consider normal representations in the similarity property, see the proof of \cite[Theorem 2.3]{C:JOT}).

\begin{proposition}
Every strongly Kadison-Kastler stable II$_1$ factor satisfies the similarity property.
\end{proposition}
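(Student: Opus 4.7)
The plan is to invoke the local equivalence between the similarity property and uniform continuity of the commutant operation established in \cite{CSSW:GAFA,CCSSWW:arXiv} and noted in the paragraph just before the proposition. Combined with the parenthetical reduction to normal representations for II$_1$ factors from \cite[Theorem 2.3]{C:JOT}, this criterion asserts that a II$_1$ factor $M$ has the similarity property provided that the operation of taking commutants is continuous at $\pi(M)$ in the Kadison-Kastler metric, uniformly over every faithful normal $^*$-representation $\pi:M\to\BHs$. I would also note that every non-zero normal representation of the factor $M$ is automatically faithful (since $M$, being a factor, has no non-trivial $\sigma$-weakly closed two-sided ideals), so the restriction to faithful representations in the definition of strong Kadison-Kastler stability aligns with the hypothesis of the criterion.

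To verify the uniform continuity of the commutant operation from strong Kadison-Kastler stability, I would fix $\eps>0$ and apply the stability hypothesis to $\eps/2$ to produce a single $\delta>0$ with the following property: for \emph{every} faithful normal representation $\pi:M\to\BHs$ and every von Neumann algebra $N\subseteq\BHs$ with $d(\pi(M),N)<\delta$, there is a unitary $u\in\BHs$ such that $u\pi(M)u^*=N$ and $\|u-\id_\Hs\|<\eps/2$. The crucial feature is that $\delta$ depends on neither $\pi$ nor $\Hs$; this uniformity is precisely what the definition of strong Kadison-Kastler stability provides.

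Taking commutants on both sides of $u\pi(M)u^*=N$ yields $u\pi(M)'u^*=N'$, and hence
$$
d\bigl(\pi(M)',N'\bigr) = d\bigl(\pi(M)',u\pi(M)'u^*\bigr) \leq 2\|u-\id_\Hs\| < \eps,
$$
using the elementary estimate $d(A,uAu^*)\leq 2\|u-\id_\Hs\|$ recorded in the introduction of the excerpt. This establishes the required uniform continuity of the commutant map at $M$, and so the criterion delivers the similarity property for $M$.

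In this plan there is no genuine obstacle once the local criterion from \cite{CSSW:GAFA,CCSSWW:arXiv} is taken as given: the argument consists entirely of transporting the uniform $\delta$ produced by strong Kadison-Kastler stability through the commutant operation. The only subtlety worth flagging is the uniformity of $\delta$ across representations, but this is, by design, exactly what the definition of strong Kadison-Kastler stability asserts; any attempt to weaken the definition to a single representation would break the step in which we pass from $u\pi(M)u^* = N$ to a bound on $d(\pi(M)', N')$ that is uniform over $\pi$.
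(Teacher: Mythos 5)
Your proposal is correct and follows essentially the same route the paper intends: strong Kadison--Kastler stability gives a representation-independent $\delta$ so that close algebras are small unitary perturbations, whence commutants are uniformly continuous at $M$ via $d(\pi(M)',N')\leq 2\|u-\id_\Hs\|$, and the local criterion of \cite{CSSW:GAFA,CCSSWW:arXiv} (with the reduction to normal representations for II$_1$ factors from \cite{C:JOT}) then yields the similarity property. Nothing is missing.
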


The similarity problem is known to have positive answers for von Neumann algebras of types I$_\infty$, II$_\infty$ and III (see \cite{H:Ann}) but remains open for finite algebras and in particular for factors of type II$_1$. Here, the only factors for which a positive answer is known are those with Murray and von Neumann's property gamma (the gamma factors are those containing asymptotically centralizing sequences, and this property was introduced in \cite{MvN:4} in order to distinguish the hyperfinite II$_1$ factor from the free group factors), \cite{C:JOT}. In particular \emph{McDuff factors} (those factors $M$ which absorb the hyperfinite II$_1$ factor $R$ tensorially, meaning that $M\cong M\vnotimes R$) have the similarity property.  Thus, to produce new examples of strongly Kadison-Kastler stable factors, we work with II$_1$ factors with property gamma.

The role played by the similarity property in obtaining examples of strongly Kadison-Kastler stable factors is encapsulated in the following result, which dates back to \cite{C:IJM}.

\begin{proposition}\label{P-SP}
Let $A$ be a $C^*$-algebra satisfying the similarity property and suppose that $\theta_1,\theta_2:A\rightarrow\BHs$ are two $^*$-representations with $\|\theta_1-\theta_2\|$ sufficiently small.  Then there exists a unitary $u$ on $\Hs$ such that $\theta_2=\Ad(u)\circ\theta_1$. Further, one can control $\|u-\id_\Hs\|$ in terms of $\|\theta_1-\theta_2\|$ and quantitative estimates on how well $A$ satisfies the similarity property.
\end{proposition}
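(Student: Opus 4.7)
The plan is to encode the pair $(\theta_1,\theta_2)$ as a single bounded (not $^*$-preserving) homomorphism which is close to a $^*$-representation, and apply the similarity property via its derivation formulation to extract an intertwiner between $\theta_1$ and $\theta_2$. Specifically, define $\rho,\phi:A\to M_2(\BHs)$ by
$$
\rho(a)=\begin{pmatrix}\theta_1(a)&\theta_1(a)-\theta_2(a)\\ 0 & \theta_2(a)\end{pmatrix},\qquad \phi(a)=\begin{pmatrix}\theta_1(a)&0\\ 0 & \theta_2(a)\end{pmatrix}.
$$
A direct calculation of $\rho(a)\rho(b)$ shows that $\rho$ is a bounded homomorphism, $\phi$ is a $^*$-representation, and $\delta:=\rho-\phi$ is a bounded derivation from $A$ into $M_2(\BHs)$ viewed as an $A$-bimodule via $\phi$, with $\|\delta\|\le\|\theta_1-\theta_2\|$.

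By Kirchberg's equivalence of the similarity and derivation problems, applied quantitatively, there exists $T\in M_2(\BHs)$ with $\delta(a)=\phi(a)T-T\phi(a)$ and $\|T\|$ bounded linearly in $\|\delta\|$ with constant depending on how well $A$ satisfies the similarity property. Writing $T=(T_{ij})$, the $(1,2)$ entry of the derivation identity reads
$$
\theta_1(a)T_{12}-T_{12}\theta_2(a)=\theta_1(a)-\theta_2(a),\qquad a\in A.
$$
Setting $S=I-T_{12}$ rearranges this to the intertwining relation $\theta_1(a)S=S\theta_2(a)$. When $\|\theta_1-\theta_2\|$ is small enough, $\|T_{12}\|<1$, so $S$ is invertible with $\|S-\id_\Hs\|$ small. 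Taking adjoints and replacing $a$ by $a^*$ in the intertwining relation also gives $S^*\theta_1(a)=\theta_2(a)S^*$, from which one computes that $S^*S$ commutes with $\theta_2(A)$; hence $|S|=(S^*S)^{1/2}\in\theta_2(A)'$. Let $S=V|S|$ be the polar decomposition; since $S$ is invertible, $V$ is a unitary, and the intertwining property descends to $V\theta_2(a)V^*=\theta_1(a)$. Taking $u=V^*$ yields $\theta_2=\Ad(u)\circ\theta_1$, and standard estimates bound $\|u-\id_\Hs\|$ in terms of $\|S-\id_\Hs\|\le\|T_{12}\|\le\|T\|$, hence in terms of $\|\theta_1-\theta_2\|$ and the similarity constants.

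The principal obstacle is the \emph{quantitative} version of Kirchberg's equivalence used in the second paragraph: one needs that every bounded derivation $\delta$ on $A$ with values in $B(\Ks)$ (for any Hilbert space $\Ks$) is inner with implementing operator of norm controlled linearly by $\|\delta\|$, with control depending only on the similarity data of $A$. The qualitative statement is Kirchberg's theorem; the quantitative refinement (tracking constants through Kirchberg's argument, or equivalently through the equivalence of the derivation problem with its bounded similarity-constant reformulation) is the technical engine that transports bounds on $\|\theta_1-\theta_2\|$ all the way through to bounds on $\|u-\id_\Hs\|$, and is what makes the proposition genuinely useful for perturbation theory.
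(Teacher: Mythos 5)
Your argument is correct and is essentially the classical proof from the cited source \cite{C:IJM}: the paper itself gives no proof of this proposition, but the off-diagonal $2\times2$ derivation trick (your $\delta(a)=\pi(a)v-v\pi(a)$ with $v$ the matrix unit $e_{12}$ and $\pi=\theta_1\oplus\theta_2$), followed by inverting $I-T_{12}$ and polarizing to a unitary intertwiner, is exactly the standard route. The quantitative input you flag is genuinely available and not a gap: the similarity property gives finite similarity degree in the sense of Pisier (equivalently Christensen's property $D_k$), which yields a uniform constant $K$ such that every derivation into $\mathcal B(\Ks)$ relative to any $^*$-representation is implemented by an operator of norm at most $K\|\delta\|$.
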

In the presence of the similarity property, if we can show that two close von Neumann algebras $M$ and $N$ on $\Hs$ are $^*$-isomorphic via an isomorphism $\theta$ close to the inclusion map $M\hookrightarrow\BHs$, then it will follow that $\theta$ is spatially implemented by a unitary close to $\id_\Hs$. Consequently $M$ will be strongly Kadison-Kastler stable.

\subsection*{Twisted crossed products}

Our new examples of strongly Kadison-Kastler stable factors arise from the crossed product construction which goes back to Murray and von Neumann.  Consider a countable infinite discrete group $\Gamma$ acting by measure preserving transformations on a probability space $(X,\mu)$ and write $\alpha$ for the induced action of $\Gamma$ on the abelian von Neumann algebra $L^\infty(X)$. A unitary-valued normalized $2$-cocycle is a function $\omega:\Gamma\times\Gamma\rightarrow\mathcal U(L^\infty(X))$ with $\omega(g,e)=\omega(e,g)=1_{L^\infty(X)}$ for all $g\in\Gamma$ which satisfies the cocycle identity
$$
\alpha_g(\omega(h,k))\omega(gh,k)^*\omega(g,hk)\omega(g,h)^*=1_{L^\infty(X)},\quad g,h,k\in\Gamma.
$$
Two such $2$-cocycles $\omega_1,\omega_2$ are \emph{cohomologous} if there exists $\nu:\Gamma\rightarrow\mathcal U(L^\infty(X))$ with $\nu(e)=1_{L^\infty(X)}$ and 
\begin{equation}\label{E-CoB}
\omega_2(g,h)=\left(\alpha_g(\nu(h))\nu(gh)^*\nu(g)\right)\omega_1(g,h),\quad g,h\in\Gamma.
\end{equation}

Given a unitary-valued normalized $2$-cocycle $\omega$, the twisted crossed product
$$
L^\infty(X)\rtimes_{\alpha,\omega}\Gamma
$$
is a von Neumann algebra generated by a copy of $L^\infty(X,\mu)$ and unitaries $(u_g)_{g\in\Gamma}$ satisfying:
\begin{equation}\label{E-TCP1}
u_gfu_g^*=\alpha_g(f),\quad u_gu_h=\omega(g,h)u_{gh},\quad f\in L^\infty(X),\ g,h\in\Gamma
\end{equation}
As the action is measure preserving, we obtain a trace $\tau$ on the twisted crossed product by extending 
\begin{equation}\label{E-TCP2}
\tau(\sum_{g\in\Gamma} f_gu_g)=\int f_{e}\,\mathrm{d}\mu
\end{equation}
from the dense $^*$-subalgebra of finite linear combinations $\sum_{g\in\Gamma} f_gu_g$ with $f_g\in L^\infty(X,\mu)$ so the twisted crossed product is of type II$_1$. The two conditions (eq.~2) and (eq.~3) characterize twisted crossed products, and we will use these to recognize factors close to a twisted crossed product as again of this form, albeit via a possibly different $2$-cocycle.

We will impose two further conditions on the action $\Gamma\curvearrowright X$ in addition to preserving a standard probability measure.
\begin{enumerate}
\item Essential freeness: For $g\neq e$, the stabilizer $\{x\in X:g\cdot x=x\}$ is required to be null. This ensures that the copy of $L^\infty(X)$ is a maximal abelian subalgebra of the twisted crossed product $L^\infty(X)\rtimes_{\alpha,\omega}\Gamma$. 
\item Ergodicity: This requires any $\Gamma$-invariant subset to be either null or co-null. In the presence of freeness, the twisted crossed product $L^\infty(X)\rtimes_{\alpha,\omega}\Gamma$ is a factor if and only if the action is ergodic.
\end{enumerate}
Combining these assumptions, the twisted crossed products $L^\infty(X)\rtimes_{\alpha,\omega}\Gamma$ are always II$_1$ factors.

We are now in position to state our main result. Recall that $SL_n(\mathbb Z)$ denotes the group of $n\times n$ matrices with integer entries and having determinant equal to 1.

\begin{theorem}\label{T-Main}
Let $(X,\mu)$ be a standard probability space and suppose that $SL_n(\mathbb Z)$ acts freely and ergodically by measure preserving transformations on $(X,\mu)$ for $n\geq 3$. Then the II$_1$ factor
\begin{equation}\label{T-Main:Form}
M=(L^\infty(X,\mu)\rtimes_\alpha SL_n(\mathbb Z))\vnotimes R
\end{equation}
is strongly Kadison-Kastler stable.
\end{theorem}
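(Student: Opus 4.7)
The plan is to combine the similarity property enjoyed by the McDuff factor $M$ with a structural analysis that pins down the form of any close von Neumann algebra $N$. Since $M\cong M\vnotimes R$ is McDuff, it has the similarity property, so by Proposition~\ref{P-SP} it suffices to produce, for every faithful normal representation $\pi\colon M\to\BHs$ and every $N\subseteq\BHs$ with $d(\pi(M),N)$ sufficiently small, a $^*$-isomorphism $\theta\colon\pi(M)\to N$ close in operator norm to the inclusion $\pi(M)\hookrightarrow\BHs$. Proposition~\ref{P-SP} will then spatially implement $\theta$ by a unitary close to $\id_\Hs$, delivering strong Kadison-Kastler stability.

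Fix such $N$; by \cite{KK:AJM}, $N$ is itself a II$_1$ factor. Writing $A=L^\infty(X)\rtimes_\alpha SL_n(\mathbb Z)$, the first step is to import the tensor splitting $\pi(M)=\pi(A\vnotimes 1)\vnotimes\pi(1\vnotimes R)$ into $N$. Theorem~\ref{T-C}, applied to the amenable subfactor $\pi(1\vnotimes R)$, yields a unitary $u_1$ close to $\id_\Hs$ with $u_1\pi(1\vnotimes R)u_1^*\subseteq N$; after conjugating $N$ by $u_1^*$ (a cost absorbed in the final estimate) we may assume $\pi(1\vnotimes R)\subseteq N$. The relative commutant $P:=\pi(1\vnotimes R)'\cap N$ is then a II$_1$ factor close to $\pi(A\vnotimes 1)$, commuting with this copy of $R$ and together generating $N$.

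The task reduces to identifying $P$ with $A$ via an isomorphism close to the inclusion. Applying Theorem~\ref{T-C} to the amenable Cartan subalgebra $\pi(L^\infty(X)\vnotimes 1)$, we may, after a further small perturbation, place it inside $P$; essential freeness of $SL_n(\mathbb Z)\curvearrowright X$ then forces it to be maximal abelian. Near-containment arguments produce unitaries $v_g\in P$ close to $\pi(u_g\vnotimes 1)$ with $v_g\pi(f\vnotimes 1)v_g^*=\pi(\alpha_g(f)\vnotimes 1)$ for $f\in L^\infty(X)$, and these must satisfy
\begin{equation*}
v_gv_h=\omega'(g,h)v_{gh}
\end{equation*}
for some unitary-valued normalized $2$-cocycle $\omega'\colon SL_n(\mathbb Z)\times SL_n(\mathbb Z)\to\mathcal U(L^\infty(X))$ uniformly close to the trivial cocycle; hence $P\cong L^\infty(X)\rtimes_{\alpha,\omega'}SL_n(\mathbb Z)$. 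Since $SL_n(\mathbb Z)$ has Kazhdan's property (T) for $n\geq 3$, a small unitary-valued $2$-cocycle is automatically a small coboundary: one extracts $\nu\colon SL_n(\mathbb Z)\to\mathcal U(L^\infty(X))$ with $\nu(e)=1_{L^\infty(X)}$ realizing \eqref{E-CoB} (with $\omega_1\equiv 1$ and $\omega_2=\omega'$) and $\sup_g\|\nu(g)-1\|$ small. Replacing $v_g$ by $\nu(g)^*v_g$ yields a genuine unitary representation implementing $\alpha$, hence an isomorphism $A\to P$ close to the inclusion, which combines with the identity on $\pi(1\vnotimes R)$ to produce the required $\theta$.

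The main obstacle is the quantitative cocycle trivialization: one needs \emph{uniform} control of $\|\nu(g)-1\|$ across \emph{all} $g\in SL_n(\mathbb Z)$, not merely on a generating set, since this uniform bound ultimately controls how close $\theta$ is to the inclusion and therefore how close the unitary produced by Proposition~\ref{P-SP} is to $\id_\Hs$. Kazhdan's property (T), entering through an explicit Kazhdan constant, is the essential tool that converts smallness of the cocycle defect into a globally small coboundary, and this is what singles out the $n\geq 3$ hypothesis. Secondary technical issues include tracking the tensor splitting through every perturbation step and sharpening the near-containment estimates for the $v_g$ into uniform control valid across the whole group.
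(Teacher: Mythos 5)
Your overall architecture matches the paper's: use the McDuff/similarity property to reduce to finding an isomorphism $\theta:M\to N$ close to the inclusion, split off the copy of $R$, recognize the relative commutant as a twisted crossed product $L^\infty(X)\rtimes_{\alpha,\omega'}SL_n(\mathbb Z)$ with $\omega'$ close to trivial, and untwist. However, there is a genuine error at the heart of the untwisting step, and two further gaps that the paper treats as substantive.

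The error: you invoke Kazhdan's property (T) to claim that a uniformly small unitary-valued $2$-cocycle $\omega':\Gamma\times\Gamma\to\mathcal U(L^\infty(X))$ is a uniformly small coboundary. Property (T) does not give this. It is a statement about first cohomology with coefficients in unitary representations, and it has no bearing on second (bounded) cohomology with coefficients in the $\Gamma$-module $L^\infty_{\mathbb R}(X,\mu)$. The tool the argument actually requires is the vanishing of $H^2_b(SL_n(\mathbb Z),L^\infty_{\mathbb R}(X,\mu))$ for $n\geq 3$, due to Burger--Monod, Monod--Shalom and Monod (the last reference being needed precisely because $L^\infty_{\mathbb R}(X,\mu)$ is a non-separable coefficient module); the uniform quantitative control $\sup_g\|\nu(g)-1\|\leq K\sup_{g,h}\|\omega'(g,h)-1\|$ then comes from the open mapping theorem applied to the coboundary map, not from a Kazhdan constant. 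Indeed, in this argument property (T) works \emph{against} you: it is used in the paper only to show that the crossed product $L^\infty(X)\rtimes_\alpha SL_n(\mathbb Z)$ itself fails property gamma, which is exactly why the tensor factor $R$ must be present to secure the similarity property. So property (T) cannot be the mechanism that makes $n\geq 3$ work; the boundedness of the relevant cohomology classes is.

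Two further gaps. First, you assert that $\pi(1\vnotimes R)$ and $P=\pi(1\vnotimes R)'\cap N$ together generate $N$, and later that $L^\infty(X)$ together with the $v_g$ generate $P$. Neither is automatic: the set of generators of a von Neumann algebra is not norm-open, and the paper devotes real machinery to this (passing to standard position, showing $N$ is also standard via the tracial-vector lemma, identifying the basic constructions $\langle M,e_A\rangle=\langle N,e_A\rangle$, and using the orthogonal family $\{v_ge_Av_g^*\}$ summing to $1$). Second, even after untwisting you only obtain $\|\theta(fu_g)-fu_g\|$ small for each $f$ in the unit ball of $L^\infty(X)$ and each $g$; this does not by itself give $\|\theta(y)-y\|$ uniformly small over the unit ball of $M_0$, which is what you need before Proposition~\ref{P-SP} can be applied. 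The paper closes this last gap by writing the unitary $W$ implementing $\theta$ in standard position as $\sum_g w_gP_g$ with $P_g=u_ge_Au_g^*$ central in $A'\cap\langle M_0,e_A\rangle$, and approximating $W$ by an element of $J_{M_0}AJ_{M_0}\subseteq(M_0\cup N_0)'$ to bound $\|\theta-\id\|_{cb}$.
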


The key property of the group $SL_n(\mathbb Z)$ used in the proof of Theorem \ref{T-Main} is cohomological. By combining the work of Burger and Monod, \cite{BM:JEMS,BM:GAFA}, and Shalom and Monod, \cite{MS:JDG}, with later results of Monod, \cite{M:Crelle}, it follows that the bounded cohomology groups
$$
H^2_b(SL_n(\mathbb Z),L^\infty_{\mathbb R}(X,\mu))
$$
vanish for $n\geq 3$ (a key difficulty which is overcome in \cite{M:Crelle} is that the module $L^\infty_{\mathbb R}(X,\mu)$ is a non-separable Banach space). In Theorem \ref{T-Main}, the groups $SL_n(\mathbb Z)$ can be replaced by any discrete group $\Gamma$ for which $H^2_b(\Gamma,L^\infty_{\mathbb R}(X,\mu))=0$; the work \cite{BM:JEMS,BM:GAFA,MS:JDG,M:Crelle} also establishes this for certain other irreducible higher rank lattices.  The effect of the vanishing of this bounded cohomology group is that the open mapping theorem gives a constant $K>0$ with the property that, for any two unitary $2$-cocycles $\omega_1,\omega_2:\Gamma\times\Gamma\rightarrow\mathcal U(L^\infty(X))$ with
$$
\sup_{g,h\in\Gamma}\|\omega_1(g,h)-\omega_2(g,h)\|<\sqrt{2},
$$
 we can find $\nu:\Gamma\rightarrow\mathcal U(L^\infty(X))$ such that (eq. 1) holds and
$$
\sup_{g\in\Gamma}\|\nu(g)-1_{L^\infty(X)}\|\leq K\sup_{g,h\in\Gamma}\|\omega_1(g,h)-\omega_2(g,h)\|.
$$
For the purpose of finding examples to which Theorem \ref{T-Main} applies, it is useful to note that for measure preserving actions of $SL_n(\mathbb Z)$ with $n\geq 3$ on non-atomic standard probability spaces $(X,\mu)$, ergodicity implies freeness by \cite{SZ:Ann}.  

Examples of suitable actions of $\Gamma=SL_n(\mathbb Z)$ are given by Bernoulli shifts.  Given a base probability space $(Y,\nu)$ (which could be atomic, but is not a singleton) form the infinite product space $X=\prod_{g\in\Gamma}Y$ indexed by the group and let $\mu$ be the product measure on $X$. Then $\Gamma$ acts on $X$ by shifting the indices: $h\cdot (x_g)_{g\in\Gamma}=(x_{hg})_{g\in\Gamma}$. When $\Gamma$ is infinite, this induces a free, ergodic, probability measure preserving action.  By suitably varying the base space $(Y,\nu)$ and using results of Bowen and Popa \cite{Bo:JAMS,P:Invent2,P:IMRN} one obtains an uncountable family of pairwise non-isomorphic factors of the form (eq. 4) to which Theorem \ref{T-Main} applies.

The role of the hyperfinite II$_1$ factor $R$ in Theorem \ref{T-Main} is to ensure that the tensor product $(L^\infty(X,\mu)\rtimes_\alpha SL_n(\mathbb Z))\vnotimes R$ has the similarity property. Indeed, if one could construct a free ergodic probability measure preserving action $\alpha:SL_n(\mathbb Z)\curvearrowright (X,\mu)$ for $n\geq 3$ such that the resulting crossed product factor $L^\infty(X,\mu)\rtimes_\alpha SL_n(\mathbb Z)$ has the similarity property then this crossed product will be strongly Kadison-Kastler stable. However, the only known method for establishing the similarity property for a II$_1$ factor is to establish property gamma. By combining results from \cite{AW:CJM,HJ:OAMP}, the presence of Kazhdan's property (T), \cite{Kaz:FAP}, for $SL_n(\mathbb Z)$ ($n\geq 3$) provides an obstruction to property gamma for the crossed product factors $L^\infty(X,\mu)\rtimes_\alpha SL_n(\mathbb Z)$.

\section*{Outline of the proof of Theorem \ref{T-Main}}

In the light of Proposition \ref{P-SP}, to prove Theorem \ref{T-Main} it suffices to show that if $N$ is close to a II$_1$ factor $M$ of the form (eq. 4), then there is a $^*$-isomorphism of $M$ onto $N$ which is close to the inclusion of $M$ into the containing $\BHs$.  Our strategy for doing this involves three main steps.
\begin{enumerate}[(i)]
\item As $M$ takes the form $M_0\vnotimes R$ (where $M_0=L^\infty(X)\rtimes_\alpha\Gamma$ and $R$ is the hyperfinite II$_1$ factor) we show that $N$ is also a McDuff factor and that after a small unitary perturbation the factorizations of $M$ and $N$ are compatible. To do this, we use the spatial embedding theorem to produce a small unitary perturbation $N_1$ of $N$ which contains $R$, and then define $N_0=(R'\cap N_1)$. One can check that  $d(M_0,N_0)$ is small.  To identify $N_1$ as $N_0\vnotimes R$ we need to show that $N_1$ is generated by $N_0$ and $R$. 

\item To obtain an isomorphism between $M_0$ and $N_0$, we transfer the crossed product structure of $M_0$ to $N_0$. Given a II$_1$ factor $N_0$ which is sufficiently close to a crossed product factor $M_0=L^\infty(X)\rtimes_\alpha\Gamma$, it is possible to  use Theorem \ref{T-C} repeatedly to find a copy of $L^\infty(X)$ inside $N_0$ close to the copy in $M_0$ and unitaries $v_g\in N_0$ normalizing $L^\infty(X)$ and inducing the same action as the $u_g$'s.  We must then show that $N_0$ is generated by $L^\infty(X)$ and the unitaries $v_g$. Once this is achieved, it follows that $N_0$ is a twisted crossed product
$$
L^\infty(X)\rtimes_{\alpha,\omega}\Gamma,
$$
where $\omega$ is a $2$-cocycle measuring the failure of multiplicitivity of the map $g\mapsto v_g$.   

\item\label{S3} In the previous step each $v_g$ can be chosen close to the corresponding $u_g$ so that
$$
\omega(g,h)=v_gv_hv_{gh}^*\approx u_gu_hu_{gh}^*=1_{L^\infty(X)},\quad g,h\in\Gamma.
$$
Our cohomological assumption then ensures that $\omega$ is cohomologous to a trivial cocycle, and this induces a $^*$-isomorphism between $M_0$ and $N_0$.  Moreover, the fact that we ask for the bounded cohomology group $H^2_b(\Gamma,L^\infty_{\mathbb R}(X))$ to vanish (and not just for $H^2(\Gamma,L^\infty_{\mathbb R}(X))$ to vanish) gives additional information: one can find a surjective $^*$-isomorphism $\theta:M_0{\rightarrow}N_0$ such that $\|\theta(fu_g)-fu_g\|$ is small for all $f\in L^\infty(X)$ with $\|f\|\leq 1$ and all $g\in\Gamma$.  In general, there is no reason to expect  $\|\theta(y)-y\|$ to be uniformly small for all $y$ in the unit ball of $M_0$, but we are able to use extra ingredients to achieve this.
\end{enumerate}
A common feature of the first two steps is the need to show that if we are given close von Neumann algebras, one of which is generated by a certain collection of elements, then the second can be generated by suitably chosen elements close to the original generators.  Since the set of generators of a von Neumann algebra is not open in the norm topology, we approach this problem indirectly by changing representations to standard position and working at the Hilbert space level. This is the subject of the next two sections, and the techniques developed there are also used to ensure that $\theta(y)$ is uniformly close to $y$ across the unit ball of $M_0$ in step (\ref{S3}). 

The steps above can be used to prove further stability results; we give two examples.   In Theorem \ref{T-Free}, we use the fact that free groups have cohomological dimension one, so that $H^2(\mathbb F_r,L^\infty_{\mathbb R}(X,\mu))=0$. This enables us to untwist the cocycle $\omega$ in step (\ref{S3}), but since $H^2_b(\mathbb F_r,\mathbb R)\neq 0$, we cannot obtain any information about how the resulting isomorphism behaves on the canonical unitaries.  In Theorem \ref{T-Hyp}, cohomological methods do not apply, and instead we use the recent work of Popa and Vaes, \cite{PV:arXiv2}, on the uniqueness (up to unitary conjugacy) of the Cartan masa in a crossed product by a hyperbolic group. The results of \cite{PV:arXiv2} are valid for a more general class of groups, and Theorem \ref{T-Hyp} holds for this class. 

\begin{theorem}\label{T-Free}
Suppose that $\mathbb F_r\curvearrowright (X,\mu)$ is a free ergodic measure preserving action of a free group on a standard probability space.  Write $M=L^\infty(X)\rtimes_\alpha\mathbb F_r$. Then there exists $\delta>0$ such that if $M\subseteq\BHs$ is a normal unital representation of $M$ and $N\subseteq\BHs$ is a von Neumann algebra with $d(M,N)<\delta$, then $N\cong M$.  If, in addition, we assume that the action is not strongly ergodic (i.e. every sequence of asymptotically invariant subsets of $X$ is approximately null or conull) then such an isomorphism $N\cong M$ is necessarily spatial.
\end{theorem}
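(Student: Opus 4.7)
The plan is to follow the three-step strategy of Theorem \ref{T-Main}, with step (i) omitted since $M$ has no tensor factor of $R$. For the isomorphism statement, given $N\subseteq\BHs$ with $d(M,N)$ sufficiently small, I would proceed directly to step (ii): iterated application of Theorem \ref{T-C}, combined with the representation-theoretic technique (promised in the next two sections of the paper) for producing generators of $N$ close to given generators of $M$, yields, after a small unitary perturbation of $N$, a copy of $L^\infty(X)$ inside $N$ close to the given one together with unitaries $v_g\in N$ close to the canonical $u_g\in M$ which normalize $L^\infty(X)$ and induce the action $\alpha$. Verifying that $L^\infty(X)$ and the $v_g$ generate $N$ identifies $N$ as a twisted crossed product $L^\infty(X)\rtimes_{\alpha,\omega}\mathbb F_r$, where $\omega(g,h)=v_gv_hv_{gh}^*$ is close to the trivial cocycle in supremum norm.

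The step (iii) untwisting is where the free group property enters. Since $\mathbb F_r$ acts freely on its Cayley tree it has cohomological dimension one, so $H^2(\mathbb F_r,L^\infty_{\mathbb R}(X,\mu))=0$ in ordinary group cohomology; in particular $\omega$ is a coboundary. Choosing $\nu:\mathbb F_r\to\mathcal U(L^\infty(X))$ with $\omega(g,h)=\alpha_g(\nu(h))\nu(gh)^*\nu(g)$, the map $\theta:M\to N$ defined by $\theta(fu_g)=f\nu(g)^*v_g$ is the desired $^*$-isomorphism. The central obstruction not resolved by this argument is that $H^2_b(\mathbb F_r,L^\infty_{\mathbb R}(X))$ is nontrivial (since $H^2_b(\mathbb F_r,\mathbb R)\neq 0$), so no open-mapping estimate controls $\sup_g\|\nu(g)-1\|$ in terms of $\sup_{g,h}\|\omega(g,h)-1\|$; consequently $\theta$ need not be close to the inclusion $M\hookrightarrow\BHs$, which blocks a direct appeal to Proposition \ref{P-SP}.

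For the spatial refinement, the failure of strong ergodicity supplies a sequence of asymptotically invariant sets whose characteristic functions form a non-trivial central sequence of projections in $M$, so $M$ has property gamma and hence satisfies the similarity property by Christensen's theorem \cite{C:JOT}. By Proposition \ref{P-SP}, it would suffice to replace $\theta$ by an isomorphism $\tilde\theta:M\to N$ with $\|\tilde\theta(y)-y\|$ small uniformly on the unit ball of $M$ (where we view $y$ via the inclusion $M\hookrightarrow\BHs$). I expect this to be the hardest part of the argument: one has to exploit the central sequences provided by non-strong ergodicity (and their images in $N$) to absorb the potentially large cocycle implementer $\nu$ into approximately central averages, so that the modified isomorphism is close to the inclusion not merely on the generators $fu_g$ but uniformly across the whole unit ball. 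Once such a $\tilde\theta$ is constructed, Proposition \ref{P-SP} produces a unitary close to $\id_\Hs$ implementing it, delivering the required spatial isomorphism.
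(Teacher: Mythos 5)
Your argument for the isomorphism $N\cong M$ follows the paper's route: steps (ii) and (iii) of the main strategy, with the untwisting of $\omega$ supplied by $H^2(\mathbb F_r,L^\infty_{\mathbb R}(X,\mu))=0$ (cohomological dimension one), and with the correct observation that $H^2_b(\mathbb F_r,\mathbb R)\neq 0$ destroys any norm control on $\nu$ and hence on $\theta$. That half is fine.

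The spatial refinement is where there is a genuine gap. You have conflated ``the isomorphism is spatial'' with ``the isomorphism is implemented by a unitary close to $\id_\Hs$,'' and your plan --- modify $\theta$ to an isomorphism uniformly close to the inclusion and then invoke Proposition \ref{P-SP} --- asks for strictly more than the theorem claims and is precisely what the announcement says cannot be done for free groups: since $\sup_g\|\nu(g)-1\|$ is uncontrollable, $\theta$ need not be close to the inclusion even on the generators, and averaging against central sequences will not absorb a fixed unitary-valued implementer that is far from $1$. If your scheme worked, it would prove that $M$ is strongly Kadison--Kastler stable, which is not asserted and is not expected here.

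The intended mechanism is different and more elementary. A $^*$-isomorphism between two II$_1$ factors acting on the same separable $\Hs$ is spatial if and only if $\dim_M(\Hs)=\dim_N(\Hs)$, so the whole content of the second statement is equality of coupling constants, with no control on the implementing unitary. Non-strong ergodicity produces nontrivial asymptotically invariant sets, hence nontrivial central sequences in $L^\infty(X)\subseteq M$, so $M$ has property gamma and therefore the similarity property by \cite{C:JOT}; this is what makes $d(M',N')$ controlled by $d(M,N)$. One then applies the coupling-constant rigidity extracted from Lemma \ref{L-Std} (as recorded in the concluding remarks: close II$_1$ factors with $\dim_M(\Hs)\leq K$ have equal coupling constants): if $\dim_M(\Hs)\leq 1$, apply it to the pair $M,N$; if $\dim_M(\Hs)\geq 1$, apply it to the close pair $M',N'$, whose coupling constants are the reciprocals. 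Either way $\dim_M(\Hs)=\dim_N(\Hs)$, and the already-constructed isomorphism is spatial. This also explains why a positive answer to Question \ref{Q1} would remove the non-strong-ergodicity hypothesis altogether --- a point that should have signalled that the hypothesis is being used for coupling constants rather than for closeness of the isomorphism to the identity representation.
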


\begin{theorem}\label{T-Hyp}
There exists $\delta>0$ with the following property. Suppose that $\Gamma_i\curvearrowright (X_i,\mu_i)$ for $i=1,2$ are two free, ergodic probability measure preserving actions of hyperbolic groups on standard probability spaces and write $M_i=L^\infty(X_i)\rtimes\Gamma_i$.  If $d(M_1,M_2)<\delta$, then $M_1\cong M_2$.
\end{theorem}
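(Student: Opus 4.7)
The plan is to mirror the strategy used for Theorem~\ref{T-Main}, but to discard step (i) (the McDuff factorization, which was only needed to obtain the similarity property; here Theorem~\ref{T-Hyp} asks merely for a $^*$-isomorphism, not a spatial one) and to replace the bounded-cohomology untwisting in step (iii) by the Popa--Vaes uniqueness of Cartan subalgebras in crossed products by hyperbolic groups, \cite{PV:arXiv2}. Write $A_i=L^\infty(X_i)\subseteq M_i$ for the canonical Cartan MASA and $u^{(i)}_g$ for the canonical unitaries.

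Given $d(M_1,M_2)<\delta$, I first apply the spatial embedding theorem (Theorem~\ref{T-C}) to the amenable (indeed abelian) subalgebra $A_1\subseteq M_1$ to obtain a unitary $u$ close to $\id_\Hs$ with $uA_1u^*\subseteq M_2$. For each $g\in\Gamma_1$, the element $uu^{(1)}_gu^*$ lies close to $M_2$, and a standard functional-calculus adjustment yields a unitary $v_g\in M_2$ close to $uu^{(1)}_gu^*$ which exactly normalizes $uA_1u^*$ and implements on it the conjugated $\Gamma_1$-action.

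The core task is then to prove (a) that $uA_1u^*$ is a MASA in $M_2$ and (b) that $uA_1u^*$ together with the $v_g$ generate all of $M_2$. These are precisely the issues addressed in step (ii) of the outline of Theorem~\ref{T-Main}: since the generators of a von Neumann algebra do not form a norm-open set, one must pass to the standard representation with a common cyclic tracial vector and argue at the Hilbert space level, using that closeness of the algebras yields closeness of the relevant cyclic subspaces and their associated projections. Granting (a) and (b), the failure of multiplicativity of $g\mapsto v_g$ produces a normalized unitary-valued $2$-cocycle $\omega$ and an isomorphism $M_2\cong L^\infty(X_1)\rtimes_{\alpha_1,\omega}\Gamma_1$ of twisted crossed products.

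To conclude, invoke the Popa--Vaes uniqueness of Cartan MASAs in $M_2=L^\infty(X_2)\rtimes\Gamma_2$ (applicable because $\Gamma_2$ is hyperbolic): there is a unitary $w\in M_2$ with $w(uA_1u^*)w^*=A_2$, so the two Cartan inclusions $uA_1u^*\subseteq M_2$ and $A_2\subseteq M_2$ are isomorphic as inclusions. The Feldman--Moore correspondence then forces the associated data (orbit equivalence relation, $2$-cocycle) to match; since $A_2\subseteq M_2$ carries the trivial cocycle, the image of $\omega$ in the cohomology of the orbit relation $R(\Gamma_1\curvearrowright X_1)$ vanishes. This is exactly the condition that collapses the twist, giving $L^\infty(X_1)\rtimes_{\alpha_1,\omega}\Gamma_1\cong L^\infty(X_1)\rtimes_{\alpha_1}\Gamma_1=M_1$, and hence $M_1\cong M_2$. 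I expect the generation statement (b) to be the main obstacle, since it requires ruling out the possibility that $uA_1u^*$ together with the $v_g$ generate only a proper subfactor of $M_2$.
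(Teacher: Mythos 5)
Your outline is correct and follows essentially the same route as the paper: run the Cartan-transfer and generation machinery of step (ii) of Theorem \ref{T-Main} (via Theorem \ref{T-C}, standard position and the basic construction) to realize $M_2$ as a twisted crossed product over the orbit relation of $\Gamma_1\curvearrowright X_1$, and then replace the bounded-cohomology untwisting of step (iii) by the Popa--Vaes uniqueness of the Cartan masa \cite{PV:arXiv2} together with the Feldman--Moore correspondence. Your identification of the generation statement as the main technical point, handled at the Hilbert-space level through the common basic construction, is exactly where the paper's argument does the work.
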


\subsection*{Changing representations, standard position and the basic construction}

The theory of normal representations of von Neumann algebras is easy to describe; any two faithful normal representations of a von Neumann algebra are unitarily equivalent after an amplification. Thus, given faithful unital normal representations $\pi_1:M\rightarrow \mathcal B(\Hs_1)$ and $\pi_2:M\rightarrow \mathcal B(\Hs_2)$, we can find a Hilbert space $\Ks$ and a unitary isomorphism $U:\Hs_1\otimes\Ks\rightarrow \Hs_2\otimes\Ks$ such that $U(\pi_1(x)\otimes\id_\Ks)=(\pi_2(x)\otimes\id_\Ks)U$ for all $x\in M$.  In this way, representations of a II$_1$ factor $M$ with separable predual on a separable Hilbert space are classified up to unitary equivalence by the coupling constant or $M$-dimension of the space.  Suppose that $M\subseteq\BHs$ is a unital normal representation on a separable Hilbert space. The commutant $M'$ is a type II factor, so is either type II$_\infty$, where we define $\dim_M(\Hs)=\infty$, or type 
II$_1$, in which case we define $\dim_M(\Hs)=\tau_{M'}(e^M_\xi)/\tau_M(e^{M'}_\xi)$, where $\tau_M$ and $\tau_{M'}$ are the normalized traces on $M$ and $M'$, $\xi$ is a unit vector in $\Hs$, $e^M_\xi$ is the projection in $M'$ onto $\overline{M\xi}$ and $e^{M'}_\xi$ is the projection in $M$ onto $\overline{M'\xi}$. This quantity is independent of the choice of $\xi$. In the lemma below, when $M$ and $N$ have separable preduals we can always reduce to the situation where they act on a separable Hilbert space by cutting by a projection with range $\overline{(M\cup N)''\xi}$ for some $\xi\in\Hs$ which lies in $M'\cap N'$.

\begin{lemma}\label{L7}
Suppose that $M$ and $N$ are II$_1$ factors acting on a separable Hilbert space $\Hs$ with $d(M,N)$ small. Let $\pi_M:M\rightarrow\BKs$ be a unital normal representation on another separable Hilbert space. Then there exists a unital normal representation $\pi_N:N\rightarrow\BKs$ with $d(\pi_M(M),\pi_N(N))\leq O(d(M,N)^{1/2})$. This estimate can be improved to $d(\pi_M(M),\pi_N(N))\leq O(d(M,N))$ when $M$ has the similarity property.
\end{lemma}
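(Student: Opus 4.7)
The plan is to transfer $\pi_M$ to a representation $\pi_N$ of $N$ by amplifying both $M$ and $\pi_M$ to a common large Hilbert space where they become unitarily equivalent, then cutting back down to $\BKs$ via a projection in the commutant close to a chosen reference projection.

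\textbf{Amplification and common intertwiner.} Consider $M\otimes 1$ and $N\otimes 1$ inside $\mathcal B(\Hs\otimes\ell^2)$, so that $d(M\otimes 1,N\otimes 1)=d(M,N)$, and the analogous amplification $\pi_M\otimes\id_{\ell^2}:M\to\mathcal B(\Ks\otimes\ell^2)$. Both $x\mapsto x\otimes 1$ and $x\mapsto\pi_M(x)\otimes 1$ are faithful normal unital representations of $M$ whose commutants $M'\vnotimes\mathcal B(\ell^2)$ and $\pi_M(M)'\vnotimes\mathcal B(\ell^2)$ are properly infinite, so both representations have coupling constant $+\infty$; hence there is a unitary $U:\Hs\otimes\ell^2\to\Ks\otimes\ell^2$ with $U(x\otimes 1)U^*=\pi_M(x)\otimes 1$ for all $x\in M$. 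Set $\sigma(y):=U(y\otimes 1)U^*$, a unital normal representation $\sigma:N\to\mathcal B(\Ks\otimes\ell^2)$ with $d(\sigma(N),\pi_M(M)\otimes 1)\leq d(M,N)$.

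\textbf{Cutting down.} Fix a rank-one projection $p\in\mathcal B(\ell^2)$ and let $e:=1_\Ks\otimes p\in(\pi_M(M)\otimes 1)'$; compression by $e$ identifies $e(\pi_M(M)\otimes 1)e$ with $\pi_M(M)$ acting on $e(\Ks\otimes\ell^2)\cong\Ks$. Given a projection $f\in\sigma(N)'$ with $\|f-e\|<1$, the standard construction of a unitary close to the identity conjugating $e$ to $f$ produces $u\in\mathcal B(\Ks\otimes\ell^2)$ with $ueu^*=f$ and $\|u-\id\|=O(\|f-e\|)$. Then
\begin{equation*}
\pi_N(y):=u^*f\sigma(y)fu=e(u^*\sigma(y)u)e,\qquad y\in N,
\end{equation*}
is a unital normal representation $N\to e\mathcal B(\Ks\otimes\ell^2)e\cong\BKs$, and comparing with $\pi_M(x)\otimes p=e(\pi_M(x)\otimes 1)e$ for $x\in M$ of norm at most one gives
\begin{equation*}
d(\pi_M(M),\pi_N(N))\leq d(M,N)+2\|u-\id\|=O(d(M,N))+O(\|f-e\|).
\end{equation*}

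\textbf{The main difficulty.} The principal obstacle is producing $f\in\sigma(N)'$ close in operator norm to $e$. Christensen's perturbation theory for commutants shows that close subalgebras of $\mathcal B(\Ks\otimes\ell^2)$ have commutants whose unit balls are close in the $2$-norm of the trace on their finite corners, with a linear estimate in $d(M,N)$; lifting a self-adjoint $2$-norm approximant of $e$ to a genuine projection close in operator norm through spectral calculus costs a square root, so one obtains $\|f-e\|=O(d(M,N)^{1/2})$ and the general bound. Under the similarity hypothesis on $M$, the local version of Kirchberg's equivalence recalled earlier in the paper upgrades this to operator-norm closeness of commutants with a linear estimate, yielding $\|f-e\|=O(d(M,N))$ and the improved bound.
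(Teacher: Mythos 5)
Your overall architecture (amplify to infinite coupling constant, intertwine by a unitary $U$, transport $N$ to $\mathcal B(\Ks\otimes\ell^2)$, then cut down by a projection in the commutant) is the same as the paper's, and your treatment of the case where $M$ has the similarity property is essentially the paper's argument and is fine: there the continuity of commutants gives a projection $f\in\sigma(N)'$ with $\|f-e\|=O(d(M,N))$ in operator norm, and the rest goes through. The problem is that in the general case the step you label ``the main difficulty'' is not just difficult but, as you have set it up, unobtainable, and your proposed mechanism for it does not work. First, the assertion that close algebras have commutants whose unit balls are close with a linear estimate is essentially the similarity problem itself; no such general result of Christensen (in operator norm or in a $2$-norm on ``finite corners'' of the type II$_\infty$ commutants appearing here) is available. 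Second, even granting a self-adjoint $h\in\sigma(N)'$ with $\|h-e\|_2$ small, spectral calculus cannot convert a $2$-norm approximant into an operator-norm approximant: $\chi_{[1/2,\infty)}(h)$ will be close to $e$ only in $2$-norm, and no power of $d(M,N)$ recovers $\|f-e\|$ small in operator norm from that. Since your construction of $\pi_N$ requires $\|u-\id\|$, hence $\|f-e\|$, to be small in \emph{operator} norm, the general case collapses at this point.

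The paper's route around this is genuinely different and you would need some version of it. One cannot approximate the \emph{prescribed} projection $e$ (equivalently, an arbitrary projection of $M'$ after amplification) by a projection of the other commutant; instead, using Christensen's theorem on derivations in the presence of a cyclic vector (\emph{Extensions of derivations II}), one finds some nonzero \emph{subprojection} $p\leq e$ in $M'$ that is close to $N'$, and it is this cyclic-vector argument that is the source of the exponent $1/2$. This creates a second problem your proposal never has to face but which is unavoidable on this route: $\dim_{Mp}(p(\Hs))$ is not under control, so the compressed representation of $M$ on $p(\Hs)$ need not be equivalent to $\pi_M$. The paper repairs this by passing to a further subprojection of $p$ arranging $\dim_{Mp}(p(\Hs))=\dim_{\pi_M(M)}(\Ks)/n$ for some integer $n$ and then amplifying by $n$ to land in a representation unitarily equivalent to $\pi_M$. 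Without the cyclic-vector derivation theorem and this coupling-constant adjustment, the general (non-similarity) half of the lemma is not proved.
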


\begin{proof}[Sketch proof of Lemma \ref{L7}.] We can assume that $\dim_M(\Hs)=\infty$, as if this is not the case we can  simultaneously amplify both $M$ and $N$ (that is replace $\Hs$ by $\Hs\otimes\ell^2(\mathbb N)$, $M$ by $M\otimes \id_{\ell^2(\mathbb N)}$ and $N$ by $N\otimes \id_{\ell^2(\mathbb N)}$) to reach this situation without changing the distance between $M$ and $N$.  If $\dim_{\pi_M(M)}(\Ks)=\infty$, then $\pi_M$ is unitarily equivalent to the initial representation of $M$ on $\Hs$, and we can use a unitary implementing this equivalence to define $\pi_N$. Otherwise we can find a projection $e\in M'$ such that $x\mapsto xe$ is a unital normal representation of $M$ on $e(\Hs)$ which is unitarily equivalent to $\pi_M$.  When $M$ has the similarity property, $M'$ and $N'$ are close and so $e$ is close to a projection $f$ in $N'$.  We can then find a unitary $u$ close to $\id_{\Hs}$ with $ueu^*=f$. This gives us a normal unital representation of $N$ on $e(\Hs)$ by $y\mapsto u^*yue$ for $y\in N$ and $uNu^*e$ is close to $Me$ on $e(\Hs)$. We define $\pi_N$ by conjugating the representation $y\mapsto u^*yuw$ by the same unitary used to show that $x\mapsto xe$ is equivalent to $\pi_M$.

In the case that $M$ does not have the similarity property, after the initial amplification it will not always be possible to approximate an arbitrary projection in $M'$ by a projection in $N'$. However, using work on the derivation problem in the presence of a cyclic vector which dates back to \cite{C:Scand}, we can show that given $e\in M'$ such that $M$ has a cyclic vector for $e(\Hs)$, then it is possible to find a non-zero subprojection $p\leq e$ in $M'$ which is close to $N'$.  By choosing a projection in $N'$ close to $p$, we obtain close representations of $M$ and $N$ on $p(\Hs)$ as above.  At this point in the argument we are only able to obtain estimates of the form $O(d(M,N)^{1/2})$ in contrast with the $O(d(M,N))$ one obtains in the presence of the similarity property.  Our methods do not enable us to get a lower bound on $\dim_M(p(\Hs))$ which could be very small, but we can take a further subprojection of $p$ to ensure that $\dim_{Mp}(p(\Hs))=\dim_{\pi_M(M)}(\Ks)/n$ for some $n\in\mathbb N$.  In this way, we can make a suitable amplification of our representations on $p(\Hs)$ so that the resulting representation of $M$ is unitarily equivalent to $\pi_M$.
\end{proof}

A II$_1$ factor $M$ is said to be in standard position on a Hilbert space $\Ks$ if $\dim_M(\Ks)=1$. In this case, there exists a unit vector $\xi\in\Ks$ so that the vector state $\langle \cdot \xi,\xi\rangle$ restricts to the traces on $M$ and $M'$. This vector has the properties that $x\xi=0$ for $x\in M$ implies that $x=0$ ($\xi$ is separating for $M$) and that $M\xi$ is dense in $\Ks$ ($\xi$ is cyclic for $M$). These properties also hold for $M'$. One defines the modular conjugation operator $J_M$ with respect to $\xi$ by extending the map $x\xi\mapsto x^*\xi$ for $x\in M$ to a conjugate linear isometry on $\Ks$. The commutant $M'$ takes the form $J_MMJ_M$ and so we have an anti-isomorphism $x\mapsto J_MxJ_M$ between $M$ and $M'$. 

By applying Lemma 9 to a pair of close II$_1$ factors $M$ and $N$ on $\Hs$, we can find new close representations on a Hilbert space $\Ks$ where $M$ is now in standard position. Our objective is to show that $N$ is also in standard position on $\Ks$.  To do this we first extend the work of \cite[Section 3]{CSSW:GAFA} to show that $N$ is almost in standard position in that $\dim_N(\Ks)\approx 1$, whence it follows that $M'$ and $N'$ are close on $\Ks$ (this is automatic when $M$ has the similarity property).  Now given an amenable subalgebra $P\subseteq M$, we have $P\subset_\gamma N$ and $J_MPJ_M\subset_\gamma N'$ for some small $\gamma$, and so we can use the spatial embedding theorem (Theorem \ref{T-C}) twice to replace $N$ by a small unitary perturbation such that $P\subseteq N$ and $J_MPJ_M\subseteq N'$.  In this way we can apply the next lemma to see that $N$ is in standard position.

\begin{lemma}\label{L-Std}
Suppose that $M$ is a II$_1$ factor in standard position on $\Ks$ with respect to $\xi\in \Ks$ and suppose that $A\subseteq M$ is a maximal abelian subalgebra (masa) in $M$.  Suppose that $N$ is another II$_1$ factor on $\Ks$ such that $A\subseteq N$,  $J_MAJ_M\subseteq N'$ and $d(M,N)$ is sufficiently small.  Then $\xi$ is a tracial vector for $N$ and $N'$ so $N$ is also in standard position on $\Ks$.
\end{lemma}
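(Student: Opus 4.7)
The plan is to show that the vector state $\omega_\xi(\cdot)=\langle\cdot\,\xi,\xi\rangle$ agrees with the unique normal trace $\tau_N$ of $N$. Combined with the cyclicity of $\xi$ for $N$ (inherited from cyclicity for $M$, noting that by the context preceding the lemma one may assume $\dim_N(\Ks)\approx 1$), this forces $N$ into standard position on $\Ks$ with trace vector $\xi$, whereupon the standard form theory of finite factors automatically yields that $\xi$ is a trace vector for $N'$ as well.

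The first step is to use $J_M A J_M\subseteq N'$ to establish that $\omega_\xi|_N$ is $A$-central. For $a\in A$, set $b=J_M a J_M\in N'$; the defining property $J_M(x\xi)=x^*\xi$ for $x\in M$ gives $b\xi=a^*\xi$ and $b^*\xi=a\xi$. Then for $x\in N$,
\[
\omega_\xi(ax)=\langle x\xi,a^*\xi\rangle=\langle x\xi,b\xi\rangle=\omega_\xi(b^*x),
\]
and similarly $\omega_\xi(xa)=\omega_\xi(xb^*)$; the commutation $b^*x=xb^*$ (since $b^*\in N'$) then yields $\omega_\xi(ax)=\omega_\xi(xa)$ for all $a\in A$ and $x\in N$.

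Next I would argue that $A$ remains a masa in $N$. Since $A$ is amenable and $d(M,N)$ is small, Christensen-type perturbation arguments (in the spirit of Theorem \ref{T-C}) show that every element of $A'\cap N$ lies close to $M$, and any element of $M$ approximately commuting with the masa $A\subseteq M$ is itself close to $A$; combined with the trivial containment $A\subseteq A'\cap N$, a spectral-projection argument then upgrades ``close'' to the identity $A'\cap N=A$. With $A$ a masa in $N$, the structure of normal $A$-central states on the II$_1$ factor $N$ forces $\omega_\xi|_N=\omega_\xi|_A\circ E_A^N$, where $E_A^N$ is the trace-preserving conditional expectation: writing $\omega_\xi|_N(x)=\tau_N(xh)$, $A$-centrality places $h\in A'\cap N=A$, and the functional factors through $E_A^N$. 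The analogous identity $\tau_N=\tau_N|_A\circ E_A^N$ reduces the target $\omega_\xi|_N=\tau_N$ to $\omega_\xi|_A=\tau_N|_A$; since $\omega_\xi|_A=\tau_M|_A$ by hypothesis, this becomes the identity $\tau_M|_A=\tau_N|_A$.

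The main obstacles will be the masa-preservation claim ($A'\cap N=A$) and the trace-agreement claim ($\tau_M|_A=\tau_N|_A$). The former is a Christensen-type relative-commutant perturbation result for amenable subalgebras, while the latter follows from the standard continuity of the trace on projections common to sufficiently close II$_1$ factors. Both rely crucially on the amenability of $A$ and the smallness of $d(M,N)$, and together they complete the reduction to $\omega_\xi|_N=\tau_N$; faithfulness of $\tau_N$ then ensures $\xi$ is separating as well as cyclic for $N$, and the conclusion about $N'$ follows from the uniqueness of the standard form.
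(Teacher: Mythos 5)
Your first half — establishing that $\langle\,\cdot\,\xi,\xi\rangle$ restricts to $\tau_N$ on $N$ — is essentially the paper's argument in a mild disguise: you use $J_MAJ_M\subseteq N'$ to make the vector state $A$-central, note that $A$ stays maximal abelian in $N$, factor the state through $E^N_A$, and reduce to $\tau_M|_A=\tau_N|_A$. The paper does the same via the averaging description of $E^N_A$ (namely $\langle uxu^*\xi,\xi\rangle=\langle x\xi,\xi\rangle$ for $u\in\mathcal U(A)$, using that $J_MuJ_M\in N'$), so the two computations are interchangeable, and your level of detail on the masa-preservation and trace-agreement steps matches the paper's ``it is not hard to check.''

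The gap is in the second half. You take the cyclicity of $\xi$ for $N$ as an input, ``inherited from cyclicity for $M$,'' and then invoke uniqueness of the standard form to get traciality on $N'$. But cyclicity of $\xi$ for $N$ is equivalent to $\xi$ being separating for $N'$, which is precisely the substantive content of the statement about $N'$ — it is not inherited by any soft argument. From $\overline{M\xi}=\Ks$ and $d(M,N)$ small one only gets $\mathrm{dist}(x\xi,N\xi)\leq d(M,N)\|x\|$ for $x\in M$, and since $\|x\xi\|=\|x\|_2$ can be far smaller than $\|x\|$, this does not force $\overline{N\xi}=\Ks$; likewise $\dim_N(\Ks)\approx 1$ only gives $\tau_{N'}(e^N_\xi)$ close to $1$, not equal to $1$. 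Everything you have actually proved (that $\xi$ is tracial, hence separating, for $N$) is consistent with a failure of the conclusion: for instance $N$ acting on $L^2(N)\oplus L^2(N)$ with $\xi=\hat1\oplus 0$ has $\xi$ tracial and separating for $N$ but not cyclic, and $\xi$ is not tracial for $N'$. The paper closes this by symmetry: since $M$ is in standard position, $d(M',N')$ is also small (this uses the preceding ``almost standard position'' step), $J_MAJ_M$ is a masa in $M'$ with $A=J_M(J_MAJ_M)J_M\subseteq N=N''$, so the entire first-half argument applies verbatim to $M',N',J_MAJ_M$ and yields $\langle\,\cdot\,\xi,\xi\rangle=\tau_{N'}$ on $N'$; separating for $N'$ then gives cyclicity for $N$, and standard position follows. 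Your proof is repaired by replacing the ``inherited cyclicity'' claim with exactly this symmetric run of your own argument.
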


\begin{proof}[Sketch proof of Lemma \ref{L-Std}.] This is proved by using the unique trace preserving expectation $E^N_A$ from $N$ onto $A$.  It is easy to check that as $A$ is maximal abelian in $M$ it is also maximal abelian in $N$, and then the form of $E^N_A$ is known: for $x\in N$, $E^N_A(x)$ lies in the strong$^*$-closed convex hull of the set $\{uxu^*:u\in\mathcal U(A)\}$ of unitary conjugates of $x$ by $A$.  The assumption $J_MAJ_M\subseteq N'$ gives
$$
\langle uxu^*\xi,\xi\rangle=\langle xJ_MuJ_M\xi,J_MuJ_M\xi\rangle=\langle x\xi,\xi\rangle,\quad u\in\mathcal U(A),
$$
so that $\langle E^N_A(x)\xi,\xi\rangle=\langle x\xi,\xi\rangle$ for all $x\in N$. As $E^N_A(x)\in A\subseteq M$ and $\xi$ is tracial for $M$, we have $\tau_M(E^N_A(x))=\langle x\xi,\xi\rangle$.  However it is not hard to check that as $M$ and $N$ are close, $\tau_M$ and $\tau_N$ agree on $A$ so that $\tau_N(E^N_A(x))=\langle x\xi,\xi\rangle$ for $x\in N$.  Since $E^N_A$ is $\tau_N$-preserving this shows that $\xi$ is tracial for $N$.  

To see that $\xi$ is also tracial for $N'$, interchange the roles of the algebras $M$ and $N$ and 
their commutants. Here we use the standard position of $M$ to ensure that $d(M',N')$ is small.
\end{proof}

In fact we immediately get further information: in the situation of Lemma \ref{L-Std} the inclusions $A\subseteq M$ and $A\subseteq N$ induce the same basic construction. This construction, developed extensively in \cite{J:Invent} is the starting point for Jones's theory of subfactors, and plays a key role in perturbation results for subalgebras of finite von Neumann algebras, \cite{P:Invent2,C:MA}.  Given a subalgebra $A$ of $M$ write $e_A$ for the projection on $\Ks$ with range $\overline{A\xi}$. The basic construction of $A\subseteq M$ is the von Neumann algebra $(M\cup\{e_A\})''$ obtained by adjoining $e_A$ to $M$ and is denoted $\langle M,e_A\rangle$.  This satisfies $\langle M,e_A\rangle=(J_MAJ_M)'$.
\begin{corollary}\label{C-Basic}
With the same hypotheses as in Lemma \ref{L-Std}, we have
$$
\langle M,e_A\rangle=\langle N,e_A\rangle.
$$
\end{corollary}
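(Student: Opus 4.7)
The plan is to identify both basic constructions as commutants of abelian von Neumann algebras and then show these abelian algebras coincide. Since $A$ is a masa in $M$ and $M$ is in standard position on $\Ks$, Jones's formula gives $\langle M,e_A\rangle=(J_MAJ_M)'$. By Lemma \ref{L-Std}, $\xi$ is also tracial for $N$, so $N$ is in standard position on $\Ks$ with its own modular conjugation $J_N$; since $A$ is also a masa in $N$ (as observed in the proof of Lemma \ref{L-Std}), the same formula yields $\langle N,e_A\rangle=(J_NAJ_N)'$. The projection $e_A$ onto $\overline{A\xi}$ is intrinsic to the pair $(A,\xi)$ and is literally the same operator in both identifications.

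It therefore suffices to prove that $J_MaJ_M=J_NaJ_N$ for every $a\in A$, and I expect this to be the only substantive step. The hypothesis $J_MAJ_M\subseteq N'$ is exactly what supplies the commutation needed to pin down $J_MaJ_M$ on the dense subspace $N\xi$. Indeed, for $b\in N$, both $J_MaJ_M$ and $J_NaJ_N$ commute with $b$ (the former by hypothesis, the latter because $J_NNJ_N=N'$) and both send $\xi$ to $a^*\xi$, giving
$$
J_MaJ_M(b\xi)=b(J_MaJ_M)\xi=ba^*\xi=b(J_NaJ_N)\xi=J_NaJ_N(b\xi).
$$
Since $\xi$ is cyclic for $N$ by standard position, this forces the two bounded operators to coincide.

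Consequently $J_MAJ_M=J_NAJ_N$ as von Neumann subalgebras of $\BKs$, and
$$
\langle M,e_A\rangle=(J_MAJ_M)'=(J_NAJ_N)'=\langle N,e_A\rangle,
$$
as required. The smallness of $d(M,N)$ has already been absorbed into Lemma \ref{L-Std}, so no further quantitative estimates are needed here. The conceptual point is that although $J_M$ and $J_N$ are a priori different modular conjugations, the asymmetric hypothesis $J_MAJ_M\subseteq N'$, together with standard position of $N$, already forces them to agree on the common masa $A$, which is precisely what equates the two basic constructions as concrete operator algebras on $\Ks$.
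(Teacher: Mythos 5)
Your proof is correct, and it reaches the crucial identity $J_MAJ_M=J_NAJ_N$ by a genuinely different and in fact more economical route than the paper. Both arguments share the same skeleton: write $\langle M,e_A\rangle=(J_MAJ_M)'$ and $\langle N,e_A\rangle=(J_NAJ_N)'$ --- legitimate for $N$ because Lemma \ref{L-Std} puts $N$ in standard position with the \emph{same} tracial vector $\xi$, so $e_A$ is simultaneously the Jones projection for $A\subseteq M$ and for $A\subseteq N$ --- and then show the two conjugated copies of $A$ coincide before taking commutants. The paper obtains the inclusion $J_MAJ_M\subseteq J_NAJ_N$ by intersecting the hypothesis $J_MAJ_M\subseteq J_NNJ_N$ with $\{e_A\}'$ and invoking $N\cap\{e_A\}'=A$, and then upgrades this inclusion to an equality by a maximality argument: $J_MAJ_M$ is a masa in $M'$, the commutants $M'$ and $N'$ are close (this is where the smallness of $d(M,N)$ re-enters), hence $J_MAJ_M$ is maximal abelian in $N'$ and must exhaust the abelian algebra $J_NAJ_N$. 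Your computation on the dense subspace $N\xi$ replaces both steps at once: since $J_MaJ_M$ and $J_NaJ_N$ each lie in $N'$ and each send $\xi$ to $a^*\xi$, they agree on $N\xi$ and hence everywhere, giving the stronger pointwise statement $J_MaJ_M=J_NaJ_N$ for every $a\in A$. This needs neither the closeness of the commutants nor the transfer of the masa property from $M'$ to $N'$, only the hypothesis $J_MAJ_M\subseteq N'$ and the cyclicity of $\xi$ for $N$ supplied by Lemma \ref{L-Std}; the trade-off is that the paper's version isolates the masa-transfer fact, which it reuses elsewhere, whereas your argument is self-contained but delivers only what the corollary asks for.
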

\begin{proof}[Proof of Corollary \ref{C-Basic}.]
We have $J_MAJ_M\subseteq N'=J_NNJ_N$ by hypothesis. Standard properties of the basic construction from \cite{J:Invent} show that $e_A$ commutes with $A$ and $J_M$ so that $J_MAJ_M\subseteq J_NNJ_N\cap\{e_A\}'=J_N(N\cap \{e_A\}')J_N=J_NAJ_N$ (using the fact that $N\cap \{e_A\}'=A$) so that 
$$
J_MAJ_M\subseteq J_NAJ_N\subseteq J_NNJ_N=N'.
$$
Now $J_MAJ_M$ is a masa in $M'$, moreover $M'$ and $N'$ are close, whence it follows that $J_MAJ_M$ is also maximal abelian in $J_NNJ_N=N'$. Hence $J_MAJ_M=J_NAJ_N$, and the result follows by taking commutants. 
\end{proof}

Once we have reached this point of our argument, we can replace $A$ in Corollary \ref{C-Basic} by an amenable subalgebra $P\subseteq M$ with $P'\cap M\subseteq P$ using a technical theorem of Popa from \cite{P:Invent}. This enables us to formulate versions of our main results for suitable actions of discrete groups on the hyperfinite II$_1$ factor: any factor of the form $(R\rtimes_\alpha SL_n(\mathbb Z))\vnotimes R$ for a properly outer action $\alpha$ and $n\geq 3$ is strongly Kadison-Kastler stable.

\subsection*{Using the basic construction to prove Theorem \ref{T-Main}}

A considerable amount of information regarding an inclusion $A\subseteq M$ of finite von Neumann algebras is encoded in the basic construction algebra $\langle M,e_A\rangle$. Of particular relevance here is Popa's result \cite[Proposition 1.4.3]{P:Ann} which shows that a masa $A$ in a II$_1$ factor $M$ is \emph{Cartan} in the sense of \cite{D:Ann} (i.e. the group of normalizers $\mathcal N_M(A)=\{u\in\mathcal U(M):uAu^*=A\}$ generates $M$ as a von Neumann algebra) if and only if $A'\cap \langle M,e_A\rangle$ is generated by projections which are finite in $\langle M,e_A\rangle$.  As the spatial embedding theorem, Lemma \ref{L7}, Lemma \ref{L-Std} and Corollary \ref{C-Basic} combine to show that close inclusions of masas into II$_1$ factors can be adjusted via a small unitary perturbation to give the same basic construction algebras, (albeit possibly on a different Hilbert space) we obtain the next result.

\begin{proposition}
Let $A\subseteq M$ be a Cartan masa in a II$_1$ factor acting on a Hilbert space $\Hs$. Any inclusion $B\subseteq N$ with $d(M,N)$ and $d(A,B)$ sufficiently small is also an inclusion of a Cartan masa in a II$_1$ factor. 
\end{proposition}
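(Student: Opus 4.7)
The strategy is to use the machinery summarized just before the proposition (Lemma \ref{L7}, Lemma \ref{L-Std}, Corollary \ref{C-Basic}, together with the spatial embedding theorem) to replace $B\subseteq N$, via a small change of representation and small unitary perturbations, by an inclusion $A\subseteq N$ of the \emph{same} masa into a II$_1$ factor which shares its basic construction algebra with $M$. Cartanness can then be transferred from $M$ to $N$ via Popa's characterization.

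First, the main result of \cite{KK:AJM} gives that $N$ is a II$_1$ factor. Apply Lemma \ref{L7} to move to a faithful normal representation on a new Hilbert space $\Ks$ on which $M$ is in standard position and such that the associated representation of $N$ is still close to that of $M$; the same lemma can be used to transport $B$ alongside $N$, so that after this change of representation we still have $d(A,B)$ small on $\Ks$, with $A\subseteq M$ and $B\subseteq N$. Since $A$ is amenable and $A\subseteq_\gamma B\subseteq N$ for small $\gamma$, the spatial embedding theorem (Theorem \ref{T-C}) produces a unitary close to $\id_{\Ks}$ conjugating $A$ into $N$; after absorbing this unitary into $N$, we may assume outright that $A\subseteq N$.

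Using the argument outlined immediately before Lemma \ref{L-Std}, the closeness of $M$ and $N$ together with the standard position of $M$ gives $\dim_N(\Ks)\approx 1$ and hence $d(M',N')$ small. The amenable subalgebra $J_MAJ_M$ of $M'$ then satisfies $J_MAJ_M\subseteq_\gamma N'$, so a second application of the spatial embedding theorem on the commutant side produces a further small unitary perturbation of $N$ after which both $A\subseteq N$ and $J_MAJ_M\subseteq N'$ hold simultaneously. Lemma \ref{L-Std} now shows that $N$ is in standard position on $\Ks$ with respect to the same cyclic and separating vector $\xi$, and Corollary \ref{C-Basic} yields
$$
\langle M,e_A\rangle=\langle N,e_A\rangle.
$$
In particular, as noted in the proof of Corollary \ref{C-Basic}, $J_MAJ_M=J_NAJ_N$, so $A$ is maximal abelian in $N'$ and hence in $N$; thus $A$ is a masa in $N$.

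To conclude, apply Popa's criterion \cite[Proposition 1.4.3]{P:Ann}: because $A$ is Cartan in $M$, the relative commutant $A'\cap\langle M,e_A\rangle$ is generated by projections which are finite in $\langle M,e_A\rangle$. Since $\langle M,e_A\rangle=\langle N,e_A\rangle$ as von Neumann algebras, these same projections are finite in $\langle N,e_A\rangle$ and generate $A'\cap\langle N,e_A\rangle$, so the reverse direction of Popa's criterion gives that $A$ is a Cartan masa in $N$. Reversing the unitary perturbations (each implemented by a unitary close to $\id_{\Ks}$) transports this conclusion back to the original inclusion $B\subseteq N$. The main obstacle is the simultaneous arrangement of $A\subseteq N$ and $J_MAJ_M\subseteq N'$ via small unitary perturbations: the two applications of the spatial embedding theorem must be controlled carefully so that the second does not destroy the containment secured by the first, which is where the standard-position reduction and the quantitative estimates in Lemma \ref{L7} and Theorem \ref{T-C} play their critical role.
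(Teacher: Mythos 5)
Your proposal follows the paper's intended argument: the paper obtains this proposition precisely by combining the spatial embedding theorem, Lemma \ref{L7}, Lemma \ref{L-Std} and Corollary \ref{C-Basic} to arrange, after a change of representation and small unitary perturbations, that the two masa inclusions induce the same basic construction algebra, and then transferring Cartanness through Popa's characterization in terms of finite projections generating $A'\cap\langle M,e_A\rangle$. The one step that is glossed over is your final sentence: ``reversing the unitary perturbations'' only shows that the original $N$ contains a Cartan masa $A_1=w^*Aw$ (with $w$ a product of the perturbing unitaries, hence close to the identity), so that $d(A_1,B)$ is small; it does not by itself show that $B$ is Cartan in $N$. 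To close this, invoke the perturbation result for subalgebras of a finite von Neumann algebra quoted in the Background (from \cite{C:IJM}): since $A_1$ and $B$ are unital von Neumann subalgebras of the finite algebra $N$ with $d(A_1,B)<1/8$, there is a unitary $v\in(A_1\cup B)''\subseteq N$ with $vA_1v^*=B$, and conjugation by a unitary of $N$ carries Cartan masas to Cartan masas, so $B$ is indeed a Cartan masa in $N$. With that addition the argument is complete and coincides with the paper's.
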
 
 
Given a crossed product II$_1$ factor $M_0=L^\infty(X)\rtimes\Gamma$ arising from a free ergodic probability measure preserving action $\alpha:\Gamma\curvearrowright (X,\mu)$, and another factor $N_0$ close to $M_0$, the assumption of freeness ensures that $L^\infty(X)$ is a maximal abelian subalgebra of $M_0$. In step 2 of Theorem \ref{T-Main}, we use the spatial embedding theorem to assume that $A=L^\infty(X)\subset N_0$ and to find unitary normalizers $\{v_g\}_{g\in\Gamma}$ in $N_0$ close to the canonical unitary normalizers $\{u_g\}_{g\in\Gamma}$ in $M_0$.  The previous proposition shows that $N_0$ is generated by all normalizers of $A$ but in fact $N_0$ is generated by $A\cup\{v_g:g\in\Gamma\}$ as required for step 2 of the proof of Theorem \ref{T-Main}.  Once we convert to standard position so that $A\subset M_0$ and $A\subset N_0$ induce the same basic construction, one sees this by first noting that $\{u_ge_Au_g^*\}_{g\in\Gamma}$ are pairwise orthogonal and sum to $1_{M_0}=1_{N_0}$. Since $u_g$ and $v_g$ are close, we must have $v_ge_Av_g^*\approx u_ge_Au_g^*$, but in fact these projections are equal (since they both lie in $Z(A'\cap\langle M,e_A\rangle)$. The fact that $1_N=\sum_{g\in\Gamma}v_ge_Av_g^*$ can then be used to see that finite linear combinations $\sum_{g\in\Gamma}v_gf_g$ (for $f_g\in A$) are dense in $N_0$.

A similar argument, working at the Hilbert space level, is used in step 1 to show that if $M$ is a McDuff factor of the form $M_0\vnotimes R$, and $N$ is close to $M$ and contains $R$, then $N$ is generated by the commuting subalgebras $R'\cap N$ and $R$.

The fact that $\sum_{g\in\Gamma}u_ge_Au_g^*=1_{M_0}=1_{N_0}$ in $Z(A'\cap \langle M_0,e_A\rangle)$ is also vital in step 3 of the proof of Theorem \ref{T-Main}.  At this point, using our earlier results, we have a crossed product $M_0=L^\infty(X)\rtimes_\alpha\Gamma$ acting in standard position on $\Hs$ with respect to $\xi$, and an isomorphic copy $N_0$ of $M_0$ on $\Hs$ with $A=L^\infty(X)\subseteq N_0$ and $J_{M_0}AJ_{M_0}=J_{N_0}AJ_{N_0}\subseteq N_0'$. The isomorphism $\theta:M_0\rightarrow N_0$ is obtained from step 2 using the vanishing of the bounded cohomology group $H^2_b(\Gamma,L^\infty_\mathbb R(X))$ and satisfies $\theta(f)=f$ for $f\in A$ and that $\|\theta(u_g)-u_g\|$ is small for all $g\in\Gamma$. Since Lemma \ref{L-Std} shows that $M_0$ and $N_0$ are both in standard position on $\Hs$, the isomorphism $\theta$ is spatially implemented on $\Hs$ by $W$ where $W$ is given by by extending the map $W(x\xi)=\theta(x)\xi$ for $x\in M_0$.  Since $\theta(f)=f$ for $f\in A$, it follows that $W\in A'$, and similarly the assumption that $\theta(M_0)=N_0\subseteq (J_{M_0}AJ_{M_0})'=(J_{N_0}AJ_{N_0})'$ ensures that $W\in (J_{M_0}AJ_{M_0})'=\langle M_0,e_A\rangle$.  That is $W\in A'\cap \langle M_0,e_A\rangle$.

Write $P_g=u_ge_Au_g^*$. It is a standard fact that $(A'\cap\langle M_0,e_A\rangle)P_g=AP_g$ for each $g\in \Gamma$ and $W$ decomposes as $W=\sum_{g\in\Gamma}w_gP_g$ for some unitary operators $w_g\in A$.  For each $g\in\Gamma$ the condition that $\theta(u_g)\approx u_g$ translates to $\alpha_g(w_e)\approx w_g$ so that (using the centrality of $P_g$), $W\approx W_1=\sum_{g\in\Gamma}\alpha_g(w_e)P_g$. On the other hand $J_{M_0}w_eJ_{M_0}P_g=\alpha_g(w_e)P_g$ so $W_1\in J_{M_0}AJ_{M_0}\subseteq (M_0\cup N_0)'$. Thus $\theta=\Ad(W)=\Ad(W_1^*W)$ has $\|\theta-\id_{M_0}\|_{cb}\leq 2\|W-W_1\|$, giving us uniform control on $\|\theta(x)-x\|$ across the unit ball of $M_0$.

\section*{Concluding remarks and open questions}

We end with some questions and possible future directions.  

It is not hard to use Lemma \ref{L-Std} to show that for each $K>0$, there exists $\delta>0$ with the property that if $M,N\subset\BHs$ are II$_1$ factors with $d(M,N)<\delta$ and $\dim_M(\Hs)\leq K$, then $\dim_N(\Hs)=\dim_M(\Hs)$. However we have not been able to show that sufficiently close II$_1$ factors necessarily have the same coupling constant in general. One consequence of a positive answer to this question would be that in Theorem \ref{T-Free} the isomorphism would automatically be spatial without the assumption of a non-strongly ergodic action.

\begin{question}\label{Q1}
Does there exist $\delta>0$ such that whenever $M,N\subset\BHs$ are II$_1$ factors with $d(M,N)<\delta$, then $\dim_M(\Hs)=\dim_N(\Hs)$?
\end{question}

\medskip

In Theorem \ref{T-Hyp}, we use uniqueness results for Cartan masas from \cite{PV:arXiv2} in order to obtain an isomorphism.  In contrast with Theorems \ref{T-Main} and \ref{T-Free}, this method relies on imposing structural hypotheses on both $M$ and $N$.  Further, there are hyperbolic groups $\Gamma$ for which Theorem \ref{T-Hyp} applies, but our cohomological methods do not.  Such factors provide a suitable test case for future developments.

\begin{question}
Let $M=L^\infty(X)\rtimes_\alpha\Gamma$ be a crossed product factor such that $L^\infty(X)$ is the unique Cartan masa up to unitary conjugacy, but the comparison map $$H^2_b(\Gamma,L^\infty_{\mathbb R}(X))\rightarrow H^2(\Gamma,L^\infty_{\mathbb R}(X))$$ is not zero (i.e. there are non-trivial bounded $2$-cocycles which are not trivial in $H^2(\Gamma,L^\infty_{\mathbb R}(X))$).  Does there exist $\delta>0$ such that any II$_1$ factor $N$ with $d(M,N)<\delta$ is isomorphic to $M$?
\end{question}

Is it possible to use the methods of \cite{Roy:arXiv} to find stability results for factors which are `completely close', i.e. $d_{cb}(M,N)=\sup d(\mathbb M_n(M),\mathbb M_n(N))$ is small? 

Finally, what is the analogous statement to Theorem \ref{T-Main} in the category of $C^*$-algebras?  A major difficulty here is that the known embedding theorem for separable nuclear $C^*$-algebras from \cite{HKW:Adv} is not as strong as Theorem \ref{T-C} in that it does not guarantee that the resulting embedding is spatial, and due to the counterexamples of \cite{Joh:CMB} it cannot give uniform control on the embedding.

\subsection*{Acknowledgments}
JC's research is partially supported by an AMS-Simons research travel grant, RS is partially supported by NSF grant DMS-1101403 and SW is partially supported by EPSRC grant EP/I019227/1. The authors gratefully acknowledge the following
additional sources of funding which enabled this research to be undertaken. A visit of EC
to Scotland in 2007 was supported by a grant from the Edinburgh Mathematical
Society; a visit of RS to Scotland in 2011 was supported by a grant from the
Royal Society of Edinburgh; SW visited Vassar college in 2010 supported by the
Rognol distinguished visitor program; JC, RS and SW visited Copenhagen in 2011 supported by the FNU of Denmark.

SW would
like to thank Peter Kropholler and Nicolas Monod for helpful discussions about
group cohomology and Jesse Petersen for helpful discussions about group actions.  The authors would like to thank Wai Kit Chan for his helpful comments
on early drafts of our work.


\begin{thebibliography}{10}

\bibitem{AW:CJM}
C.~A. Akemann and M.~E. Walter.
\newblock Unbounded negative definite functions.
\newblock {\em Canad. J. Math.}, 33(4):862--871, 1981.

\bibitem{Bo:JAMS}
L.~Bowen.
\newblock Measure conjugacy invariants for actions of countable sofic groups.
\newblock {\em J. Amer. Math. Soc.}, 23(1):217--245, 2010.

\bibitem{BM:JEMS}
M.~Burger and N.~Monod.
\newblock Bounded cohomology of lattices in higher rank {L}ie groups.
\newblock {\em J. Eur. Math. Soc. (JEMS)}, 1(2):199--235, 1999.

\bibitem{BM:GAFA}
M.~Burger and N.~Monod.
\newblock Continuous bounded cohomology and applications to rigidity theory.
\newblock {\em Geom. Funct. Anal.}, 12(2):219--280, 2002.


\bibitem{CCSSWW:arXiv}
J.~Cameron, E.~Christensen, A.~M. Sinclair, R.~R. Smith, S.~White, and A.~D.
  Wiggins.
\newblock A remark on the similarity and perturbation problems.
\newblock arXiv:1206.5405v1, 2012.

\bibitem{CCSSWW:Preprint}
J.~Cameron, E.~Christensen, A.~M. Sinclair, R.~R. Smith, S.~White, and A.~D.
  Wiggins.
 \newblock Kadison-Kastler stable factors
 \newblock arXiv:1210.4116, 2012.

\bibitem{CC:BLMS}
M.~D. Choi and E.~Christensen.
\newblock Completely order isomorphic and close {$C^{\ast} $}-algebras need not
  be {$^{\ast} $}-isomorphic.
\newblock {\em Bull. London Math. Soc.}, 15(6):604--610, 1983.

\bibitem{C:Invent}
E.~Christensen.
\newblock Perturbations of operator algebras.
\newblock {\em Invent. Math.}, 43(1):1--13, 1977.

\bibitem{C:IJM}
E.~Christensen.
\newblock Perturbations of operator algebras {II}.
\newblock {\em Indiana Univ. Math. J.}, 26:891--904, 1977.

\bibitem{C:MA}
E.~Christensen.
\newblock Subalgebras of a finite algebra.
\newblock {\em Math. Ann.}, 243(1):17--29, 1979.

\bibitem{C:Acta}
E.~Christensen.
\newblock Near inclusions of {$C^*$}-algebras.
\newblock {\em Acta Math.}, 144(3-4):249--265, 1980.

\bibitem{C:Scand}
E.~Christensen.
\newblock Extensions of derivations. {II}.
\newblock {\em Math. Scand.}, 50(1):111--122, 1982.

\bibitem{C:JOT}
E.~Christensen.
\newblock Similarities of {${\rm II}_1$} factors with property {$\Gamma$}.
\newblock {\em J. Operator Theory}, 15(2):281--288, 1986.

\bibitem{CSSW:GAFA}
E.~Christensen, A.~M. Sinclair, R.~R. Smith, and S.~White.
\newblock Perturbations of {$C^\ast$}-algebraic invariants.
\newblock {\em Geom. Funct. Anal.}, 20(2):368--397, 2010.

\bibitem{CSSWW:PNAS}
E.~Christensen, A.~M. Sinclair, R.~R. Smith, S.~White, and W.~Winter.
\newblock The spatial isomorphism problem for close separable nuclear
  {$C^*$}-algebras.
\newblock {\em Proc. Natl. Acad. Sci. USA}, 107(2):587--591, 2010.

\bibitem{CSSWW:Acta}
E.~Christensen, A.~M. Sinclair, R.~R. Smith, S.~White, and W.~Winter.
\newblock Perturbations of nuclear {C$^*$}-algebras.
\newblock {\em Acta. Math.}, 208(1):93--150, 2012.

\bibitem{Con:Ann}
A.~Connes.
\newblock Classification of injective factors. {C}ases {$II\sb{1},$}
  {$II\sb{\infty },$} {$III\sb{\lambda },$} {$\lambda \not=1$}.
\newblock {\em Ann. of Math. (2)}, 104(1):73--115, 1976.

\bibitem{D:Ann}
J.~Dixmier.
\newblock Sous-anneaux ab\'eliens maximaux dans les facteurs de type fini.
\newblock {\em Ann. of Math. (2)}, 59:279--286, 1954.

\bibitem{H:Ann}
U.~Haagerup.
\newblock Solution of the similarity problem for cyclic representations of
  {$C^{\ast} $}-algebras.
\newblock {\em Ann. of Math. (2)}, 118(2):215--240, 1983.

\bibitem{HJ:OAMP}
R.~H. Herman and V.~F.~R. Jones.
\newblock Central sequences in crossed products.
\newblock In {\em Operator algebras and mathematical physics ({I}owa {C}ity,
  {I}owa, 1985)}, volume~62 of {\em Contemp. Math.}, pages 539--544. Amer.
  Math. Soc., Providence, RI, 1987.

\bibitem{HKW:Adv}
I.~Hirshberg, E.~Kirchberg, and S.~White.
\newblock Decomposable approximations of nuclear {$C^*$}-algebras.
\newblock {\em Adv. in Math.}, 230:1029--1039, 2012.

\bibitem{Joh:PLMS}
B.~E. Johnson.
\newblock Perturbations of {B}anach algebras.
\newblock {\em Proc. London Math. Soc. (3)}, 34(3):439--458, 1977.

\bibitem{Joh:CMB}
B.~E. Johnson.
\newblock A counterexample in the perturbation theory of {$C^*$}-algebras.
\newblock {\em Canad. Math. Bull.}, 25(3):311--316, 1982.

\bibitem{J:Invent}
V.~F.~R. Jones.
\newblock Index for subfactors.
\newblock {\em Invent. Math.}, 72(1):1--25, 1983.

\bibitem{K:AJM}
R.~V. Kadison.
\newblock On the othogonalization of operator representations.
\newblock {\em Amer. J. Math.}, 77:600--620, 1955.

\bibitem{KK:AJM}
R.~V. Kadison and D.~Kastler.
\newblock Perturbations of von {N}eumann algebras. {I}. {S}tability of type.
\newblock {\em Amer. J. Math.}, 94:38--54, 1972.

\bibitem{Kaz:FAP}
D.~Ka{\v{z}}dan.
\newblock On the connection of the dual space of a group with the structure of
  its closed subgroups.
\newblock {\em Funkcional. Anal. i Prilo\v zen.}, 1:71--74, 1967.

\bibitem{Ki:JOT}
E.~Kirchberg.
\newblock The derivation problem and the similarity problem are equivalent.
\newblock {\em J. Operator Theory}, 36(1):59--62, 1996.

\bibitem{M:Crelle}
N.~Monod.
\newblock On the bounded cohomology of semi-simple groups, {$S$}-arithmetic
  groups and products.
\newblock {\em J. Reine Angew. Math.}, 640:167--202, 2010.

\bibitem{MS:JDG}
N.~Monod and Y.~Shalom.
\newblock Cocycle superrigidity and bounded cohomology for negatively curved
  spaces.
\newblock {\em J. Differential Geom.}, 67(3):395--455, 2004.

\bibitem{MvN:4}
F.~Murray and J.~von Neumann.
\newblock On rings of operators. {IV}.
\newblock {\em Ann. of Math. (2)}, 44:716--808, 1943.

\bibitem{Ph:IUMJ}
J.~Phillips.
\newblock Perturbations of {$C\sp{\ast} $}-algebras.
\newblock {\em Indiana Univ. Math. J.}, 23:1167--1176, 1973/74.

\bibitem{PhR:CJM}
J.~Phillips and I.~Raeburn.
\newblock Perturbations of {AF}-algebras.
\newblock {\em Canad. J. Math.}, 31(5):1012--1016, 1979.

\bibitem{PhR:PLMS}
J.~Phillips and I.~Raeburn.
\newblock Perturbations of {$C^*$}-algebras. {II}.
\newblock {\em Proc. London Math. Soc. (3)}, 43(1):46--72, 1981.

\bibitem{Pit:MA}
D.~R. Pitts.
\newblock Close {CSL} algebras are similar.
\newblock {\em Math. Ann.}, 300(1):149--156, 1994.

\bibitem{P:Invent}
S.~Popa.
\newblock On a problem of {R}. {V}. {K}adison on maximal abelian {$\ast
  $}-subalgebras in factors.
\newblock {\em Invent. Math.}, 65(2):269--281, 1981/82.

\bibitem{P:Ann}
S.~Popa.
\newblock On a class of type {${\rm II}\sb 1$} factors with {B}etti numbers
  invariants.
\newblock {\em Ann. of Math. (2)}, 163(3):809--899, 2006.

\bibitem{P:Invent2}
S.~Popa.
\newblock Strong rigidity of {$\rm II_1$} factors arising from malleable
  actions of {$w$}-rigid groups. {I}.
\newblock {\em Invent. Math.}, 165(2):369--408, 2006.

\bibitem{P:IMRN}
S.~Popa.
\newblock On {O}zawa's property for free group factors.
\newblock {\em Int. Math. Res. Not. IMRN}, (11):Art. ID rnm036, 10, 2007.

\bibitem{PV:arXiv2}
S.~Popa and S.~Vaes.
\newblock Unique {C}artan decomposition for {II$_1$} factors arising from
  arbitrary actions of hyperbolic groups.
\newblock arXiv:1201.2824, 2012.

\bibitem{RT:JFA}
I.~Raeburn and J.~L. Taylor.
\newblock Hochschild cohomology and perturbations of {B}anach algebras.
\newblock {\em J. Funct. Anal.}, 25(3):258--266, 1977.

\bibitem{Roy:arXiv}
J.~Roydor.
\newblock A non-commutative {A}mir-{C}ambern theorem for von-{N}eumann
  algebras.
\newblock arXiv.1108.1970v1, 2011.

\bibitem{SZ:Ann}
G.~Stuck and R.~Zimmer.
\newblock Stabilizers for ergodic actions of higher rank semisimple groups.
\newblock {\em Ann. of Math. (2)}, 193(3):723--747, 1994.

\end{thebibliography}
\end{document}